\renewcommand{\r}{\mathbb{R}}
\newcommand{\R}{\mathbb{R}}
\newcommand{\C}{\mathbb{C}}
\newcommand{\N}{\mathbb{N}}
\newcommand{\prob}{\mathcal{P}}
\newcommand{\sph}{\mathbb{S}}
\newcommand{\Ws}{W_{\!s}}
\newcommand{\resmes}{\mathbin{\vrule height 1.6ex depth 0pt width 0.13ex\vrule height 0.13ex depth 0pt width 1.3ex}}
\DeclareMathOperator{\supp}{supp}
\DeclareMathOperator{\SO}{SO}
\DeclareMathOperator{\iso}{iso}
\DeclareMathOperator{\loc}{loc}
\numberwithin{equation}{section}
\newtheorem{teo}{Theorem}[section]
\newtheorem{cor}[teo]{Corollary}
\newtheorem{lemma}[teo]{Lemma}
\theoremstyle{definition}
\newtheorem{remark}[teo]{Remark}
\title[Nonlocal anisotropic Riesz interactions with a physical confinement]{Nonlocal anisotropic Riesz interactions\\ with a physical confinement}
\author[M.G.~Mora]{M.G.~Mora}
\author[L.~Rondi]{L.~Rondi}
\author[L.~Scardia]{L.~Scardia}
\author[E.G.~Tolotti]{E.G.~Tolotti}
\address[M.G.~Mora]{Dipartimento di Matematica, Universit\`a di Pavia, Italy}\email{mariagiovanna.mora@unipv.it}
\address[L.~Rondi]{Dipartimento di Matematica, Universit\`a di Pavia, Italy}\email{luca.rondi@unipv.it} 
\address[L.~Scardia]{Department of Mathematics, Heriot-Watt University, United Kingdom}\email{L.Scardia@hw.ac.uk} 
\address[E.G.~Tolotti]{Dipartimento di Matematica, Universit\`a di Pavia, Italy}\email{edoardogiovann.tolotti01@universitadipavia.it}
\begin{document}

    \begin{abstract}
        In this work we fully characterize, in any space dimension, the minimizer of a class of nonlocal and anisotropic Riesz energies defined over probability measures supported on ellipsoids.
        In the super-Coulombic and Coulombic regime, we prove that the minimizer is independent of the anisotropy.
        In contrast, in the sub-Coulombic regime we show that this property fails: we exhibit an example of anisotropy for which the isotropic minimizer is not optimal.
        In order to prove our main result, we provide a formula for the potential inside an ellipsoid, valid in any space dimension and involving the hypergeometric function.
        \bigskip

        \noindent\textbf{AMS 2010 Mathematics Subject Classification:}  31A15 (primary); 49K20 (secondary)

        \medskip

        \noindent \textbf{Keywords:} nonlocal energy, potential theory, anisotropic interaction, Riesz potential
    \end{abstract}

    \maketitle

    \section{Introduction}

    Nonlocal energies are ubiquitous: they are the continuum counterpart of discrete, pairwise, long-range interactions, and are used to model aggregation phenomena for large numbers of agents of different nature and size, from animals to vehicles, charges, and defects in materials.
    A vast literature is focused on attractive-repulsive energies, where repulsion dominates at small distances, and attraction dominates at large distances.
    Depending on the mutual strength of the attraction and the repulsion, energy minimizers exhibit a variety of patterns, from concentration on points and lower dimensional sets to diffusion.

    In this work we characterize the minimizers of a class of attractive-repulsive energies where the repulsion is driven by an anisotropic Riesz potential, and the attraction is forced by the constraint that minimizers must be compactly supported.
    More precisely, let $\prob(\R^d)$ be the space of probability measures on $\R^d$, $d\geq 2$.
    We consider the repulsive interaction energy 
    \begin{equation}\nonumber
        I_s(\mu) = \int_{\R^d} \int_{\R^d} \Ws(x-y) \, d\mu(x) d\mu(y) 
    \end{equation}
    for $\mu \in \prob(\R^d)$,
    where the interaction kernel $\Ws$, for $s \in (0, d)$, is of the form
    \begin{equation}\label{eq:definition_W}
        \Ws(x) = \frac{1}{|x|^s} \Phi\bigg(\frac{x}{|x|}\bigg)
    \end{equation}
    for $x\neq 0$, and $W_s(0)=+\infty$, and the profile  $\Phi: \sph^{d-1}\to\R$ is a continuous, even, and strictly positive function.

    The attraction is modelled in terms of the indicator function of a compact set; namely given a compact set $E\subset\R^d$, we define the confinement potential
    \begin{equation}\label{eq:definition_VE}
        V_E(x) = \begin{cases}
            0 & \text{if } x \in E,\\
            +\infty & \text{otherwise}.
        \end{cases}
    \end{equation}
    The attractive-repulsive energy we study is then 
    \begin{equation}\label{energ-ysE}
        I_s^{E}(\mu) = I_s(\mu) + \int_{\R^d} V_E(x) \, d\mu (x) 
    \end{equation}
    for $\mu\in\prob(\R^d)$. 
    Clearly, minimizing $I_s^{E}$ over $\prob(\R^d)$ is equivalent to minimizing $I_s$ over the set of measures $\mu\in \prob(\R^d)$ with $\supp\mu \subset E$.

    The special choices $\Phi\equiv1$ in \eqref{eq:definition_W} and $E=B_1$ in \eqref{eq:definition_VE}, where $B_1$ denotes the closed unit ball of $\R^d$ centered at the origin, correspond to a radially symmetric energy $I_{\iso,s}^{B_{1}}$, which has been widely studied in the last century. It is well known that  the unique minimizer of $I_{\iso,s}^{B_{1}}$ is the measure $\mu_{\iso, s}$
    defined as
    \begin{equation}\nonumber
        \mu_{\iso, s} = \begin{cases}
            c_{s, d}(1-|x|^{2})^{\frac{s-d}{2}} \mathcal{L}^{d}(x) \resmes B_{1} \quad & \text{if } d-2< s <d ,\smallskip\\
            c_{s, d} \mathcal{H}^{d-1} \resmes \partial B_{1} \quad & \text{if } 0<s \leq d-2,
            \end{cases}
    \end{equation}
    where $c_{s, d}$ is a normalization constant (see \eqref{cuq:def} below). The minimality of $\mu_{\iso, s}$ for $I_{\iso,s}^{B_{1}}$ dates back to Riesz \cite{RIESZ1988-2,RIESZ1988}.
    The cases $d = 1, 2, 3$ had already been solved a few years earlier by P\'olya and Szeg\H{o} \cite{POLYA1931}. 
    More recently there has been a renewed interest in this result, and alternative methods of proofs have been provided, see e.g.\ \cite{CHAFAI2022,Dyda}. 

    We observe that the Coulomb exponent $s=d-2$ acts as a threshold for the dimensionality of the minimizer. At the threshold value $s=d-2$ the minimizer concentrates on the boundary of the constraint. If the potential is more singular at the origin, then the minimizing measure gets more and more diffuse, with a density depending on the Riesz exponent $s>d-2$. On the other hand, if the potential is less singular than Coulombic, then $\mu_{\iso,s}=\mu_{\iso,d-2}$ for every $0<s\leq d-2$. In other words, once the minimizer concentrates at the Coulomb threshold $s=d-2$, it remains the same for all the sub-Coulombic Riesz exponents. 

    In this work we mainly focus on super-Coulombic Riesz interactions $s\geq d-2$, and confining sets $E$ given by ellipsoids. 
    Our main result is the following.

    \begin{teo}
        \label{thm:main}
        Let $d\geq 2$, $s\in(0,d)$ with $s\geq d-2$, and let $\Ws$ be as in \eqref{eq:definition_W} with $\Phi:{\mathbb S}^{d-1}\to\R$ a continuous, even, and strictly positive function. Assume that $\widehat \Ws$
        is continuous and non-negative on $\sph^{d-1}$. 
        Let $E\subset \R^d$ be an ellipsoid of the form $E=RDB_1$ for some $R\in \SO(d)$ and some positive-definite $d\times d$ diagonal matrix $D$, and let $T^{E}$ be the map $T^E(x) = RDx$ for $x\in\R^d$.
        Then the unique minimizer of $I_s^{E}$ over $\prob(\R^d)$ is the push-forward of the measure $\mu_{\iso, s}$ by the map $T^{E}$, that is, the probability measure
        \begin{equation}\nonumber
        \mu_{s}^E = \begin{cases}
                |E|^{-1}\dfrac{\Gamma(1+\frac{s}{2})}{\Gamma(1+\frac d2)\Gamma(1+\frac{s-d}{2})}(1-|D^{-1}R^Tx|^{2})^{\frac{s-d}{2}} \mathcal{L}^{d}(x) \resmes E \quad & \text{if } d-2< s <d ,\medskip\\
                 \big(\mathcal{H}^{d-1}(\partial B_1)\det D\big)^{-1}|D^{-2}R^Tx|^{-1}  \mathcal{H}^{d-1}(x) \resmes \partial E\quad & \text{if } 0< s=d-2.
                \end{cases}
        \end{equation}
    \end{teo}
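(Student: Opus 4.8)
My plan is to reduce the anisotropic, ellipsoidal problem to the isotropic, spherical one via the Euler--Lagrange characterization of minimizers, exploiting the fact that the candidate $\mu_s^E$ is, by construction, the push-forward $(T^E)_\#\mu_{\iso,s}$. First I would establish the variational characterization: a measure $\mu\in\prob(\R^d)$ with $\supp\mu\subset E$ and $I_s(\mu)<+\infty$ is the (unique) minimizer of $I_s^E$ if and only if its potential $\Ws*\mu$ equals a constant $c$ quasi-everywhere on $\supp\mu$ and is $\geq c$ quasi-everywhere on $E$. Uniqueness here is where the Fourier hypothesis enters: since $\widehat{\Ws}\geq 0$, the energy $I_s$ is strictly convex on the set of measures of finite energy (the bilinear form $(\mu,\nu)\mapsto\int\widehat{\Ws}\,\hat\mu\overline{\hat\nu}$ is an inner product modulo constants), so a minimizer, if it exists, is unique, and existence follows from the direct method using lower semicontinuity of $I_s$ under weak-$*$ convergence together with the compactness of $E$. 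I expect all of this to be either standard potential theory or already recorded earlier in the paper.

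The heart of the matter is then to verify the Euler--Lagrange conditions for $\mu_s^E$. The key step is the change of variables $x = T^E(\xi) = RD\xi$: writing $\mu_s^E = (T^E)_\#\mu_{\iso,s}$, the potential at $x = RD\xi$ becomes
\[
(\Ws*\mu_s^E)(RD\xi) = \int_{\R^d}\Ws\bigl(RD(\xi-\eta)\bigr)\,d\mu_{\iso,s}(\eta).
\]
Because $\Ws(RDz)$ is \emph{not} a multiple of $\Ws(z)$ when $\Phi\not\equiv 1$, this is not literally the isotropic potential of $\mu_{\iso,s}$; instead it is the potential of $\mu_{\iso,s}$ with respect to a \emph{modified} kernel $\widetilde{W}(z) := \Ws(RDz)$, which is still a Riesz-type kernel of the same homogeneity $-s$ but with a new (still continuous, even, positive) angular profile. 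So what I really need is the known fact — this is the promised explicit formula involving ${}_2F_1$ — that for \emph{any} admissible kernel of homogeneity $-s$ with $s\geq d-2$, the potential of $\mu_{\iso,s}$ on $B_1$ is constant on $\supp\mu_{\iso,s}$. Concretely: for $d-2<s<d$ one shows $\int_{B_1}|z-\eta|^{-s}\Psi\bigl(\tfrac{z-\eta}{|z-\eta|}\bigr)(1-|\eta|^2)^{(s-d)/2}\,d\eta$ is independent of $z\in B_1$ for every admissible profile $\Psi$ (here $\Psi$ is the profile of $\widetilde W$); and for $s=d-2$ the analogous statement with the surface measure $\mu_{\iso,d-2}$ on $\partial B_1$ — which is just the classical fact that the (anisotropic) single-layer potential of the equilibrium measure is constant inside, a consequence of the mean-value/ellipsoid-potential computation. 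Granting that internal-constancy result, constancy of $\Ws*\mu_s^E$ on $\supp\mu_s^E = E$ (resp.\ $\partial E$) follows immediately by the change of variables above, since $T^E$ maps $\mathrm{supp}\,\mu_{\iso,s}$ onto $\mathrm{supp}\,\mu_s^E$.

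It remains to check the inequality $\Ws*\mu_s^E \geq c$ on all of $E$, and the correct normalization of $\mu_s^E$. For the super-Coulombic case $d-2<s<d$ the support of $\mu_{\iso,s}$ is all of $B_1$, so the inequality is vacuous (equality holds everywhere on $E$); for $s=d-2$ one needs that the anisotropic single-layer potential of $\mu_{\iso,d-2}$ on $\partial B_1$ dominates its boundary value throughout $B_1$, which follows from the maximum principle for the associated (constant-coefficient, elliptic) operator whose fundamental solution is $\widetilde W$ — i.e.\ $\widetilde W$-superharmonicity of the potential of a positive measure supported on $\partial B_1$. The normalization constants in the statement are obtained by transporting $c_{s,d}$ through $T^E$: the density of $(T^E)_\#\bigl((1-|\cdot|^2)^{(s-d)/2}\mathcal L^d\resmes B_1\bigr)$ picks up the Jacobian factor $|\det D|^{-1} = |E|^{-1}\omega_d$, and for $s=d-2$ the surface measure transforms with the factor $|D^{-2}R^Tx|\,/\,$(something), which one reads off from the standard formula for the push-forward of Hausdorff measure under a linear map restricted to a sphere. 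The main obstacle, and the place where the bulk of the work (and the hypergeometric function) lives, is the internal-constancy statement for general angular profiles $\Psi$; everything else is bookkeeping around the change of variables and classical potential theory. I would therefore organize the proof so that this computation is isolated as a separate lemma, proved via the explicit ellipsoid-potential formula, and then Theorem~\ref{thm:main} becomes a short deduction.
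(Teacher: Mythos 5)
Your overall architecture matches the paper's: existence/uniqueness plus the Euler--Lagrange characterization (the paper's Theorem~\ref{thm:exun}), and then verification of (EL1)--(EL3) for the push-forward measure by showing that the anisotropic potential is constant inside the ellipsoid. However, there is a genuine gap at exactly the point you yourself flag as ``the heart of the matter'': the internal-constancy statement for a general angular profile is not a known fact you can quote, it is the main technical content of the paper (Theorem~\ref{teo:formula_convolution}). Your proposal reduces the theorem to this lemma and then says it is ``proved via the explicit ellipsoid-potential formula'', but for anisotropic kernels no such classical formula exists; the paper has to produce it, by regularizing the potential as $P_r=\chi_r\ast\Ws\ast\mu_q^E$ (since $\widehat{\Ws\ast\mu_q^E}$ is not integrable at infinity), applying Fourier inversion, evaluating a triple-Bessel integral via Bailey's formula in terms of the Appell function $F_4$, and passing to the limit $r\to0^+$ to obtain the ${}_2F_1$ representation \eqref{inside}, together with a separate regularity lemma (Lemma~\ref{regular_lemm}) guaranteeing pointwise convergence of $P_r$ and continuity of the potential up to $\partial E$. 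None of this is supplied or replaced by an alternative argument, so the proposal is an outline that presupposes the paper's key theorem rather than a proof.

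A second, more concrete flaw concerns your treatment of (EL3) in the Coulomb case $s=d-2$. You propose to get $\Ws\ast\mu_{d-2}^E\geq c$ inside $E$ from ``the maximum principle for the constant-coefficient elliptic operator whose fundamental solution is $\widetilde W$''. For a general even, positive profile $\Phi$ (equivalently, after your change of variables, a general profile for $\widetilde W$), the kernel is \emph{not} the fundamental solution of any local second-order constant-coefficient operator --- that happens only for the special profiles $|A\theta|^{2-d}$ --- and no maximum or domination principle is available for potentials of general anisotropic Riesz kernels (indeed Section~\ref{Sec:counter} shows that isotropic intuition fails for such kernels once $s<d-2$). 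The paper does not need any such principle: formula \eqref{formula-qs} with $q=s$ shows the potential is constant in the \emph{interior} of $E$ even when the measure sits on $\partial E$, and by the continuity established in Lemma~\ref{regular_lemm} (valid since $s=d-2<\min(d,\tfrac{q+d}{2})=d-1$) this constancy extends to all of $E$, so (EL2) and (EL3) hold with equality. To repair your argument you should strengthen your key lemma to constancy of the potential on the whole closed ball/ellipsoid (not merely on the support of the measure), which is precisely what the paper's hypergeometric formula delivers, and drop the maximum-principle step.
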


    In the statement above we require $s>0$, hence logarithmic interactions in two dimensions, which correspond loosely speaking to $s=0$, are not included in our analysis. However,
    the results in \cite{Mora-Muenster, MS} show that also in this case minimizers are the push-forward of the measure $(2\pi)^{-1}\mathcal{H}^1\resmes\partial B_1$ by $T^E$, in agreement with the statement of Theorem~\ref{thm:main} for the Coulomb scaling $s=d-2$.

    Surprisingly, the minimizing measure $\mu_s^E$ is completely independent of the profile $\Phi$, and its support is fully determined by the confinement term. 
    It is natural to ask whether this phenomenon occurs also in the sub-Coulombic regime $s<d-2$. In Section~\ref{Sec:counter} we give a negative answer to this question:
    we consider $E=B_1$ and we show that, for a suitable profile $\Phi$, the measure $c_{s, d} \mathcal{H}^{d-1} \resmes \partial B_{1}$ does not satisfy the Euler--Lagrange conditions for $I_s^{B_1}$ when $s<d-2$. 
    Hence, for $0<s<d-2$ the anisotropy $\Phi$ \textit{does} play a role in determining the energy minimizers, unlike for $s\geq d-2$.

    The role of the anisotropy in determining the shape and dimension of the  minimizer has been the subject of recent attention in the case of quadratic attraction, namely for attractive-repulsive energies as in \eqref{energ-ysE}, with $V_E(x)$ replaced by $|x|^2$. The most general result to date is \cite{RieszFMMRSV}, where the authors extend the papers \cite{CARRILLO2023,CARRILLO2023-2,MMRSV2023}
    beyond dimensions two and three. Unlike the case of physical confinement treated here, however, the characterization of the minimizer in \cite{RieszFMMRSV} has been proved under some restrictions on the Riesz exponent $s$, and consequently on the space dimension $d$, which  are stricter than for the isotropic counterpart $\Phi\equiv1$. 
    We refer to \cite{FRANKMatzke2023} for a complete overview of the available results in the isotropic case.

    For the proof of Theorem~\ref{thm:main} we show that, for any ellipsoid $E$ and any $s\geq d-2$, the corresponding measure $\mu_s^E$ satisfies the Euler--Lagrange conditions for the energy $I_s^E$ (see (EL1)--(EL3) in Theorem~\ref{thm:exun}). To this aim, the strategy devised in \cite{MMRSV2023}, and further developed in \cite{RieszFMMRSV}, requires rewriting the potential $\Ws\ast\mu_s^E$, for a general ellipsoid $E$, in Fourier form. Due to the lack of integrability of the Fourier transform of $\Ws\ast\mu_s^E$ at infinity, however, this can be done only for the regularized potentials $P_r=\frac{\chi_{B_r}}{|B_r|}\ast\Ws\ast\mu_s^E$, for $r>0$. The representation of $\Ws\ast\mu_s^E$ is then obtained in Theorem~\ref{teo:formula_convolution} via a careful limit procedure as $r\to 0^+$.

    We note that Theorem~\ref{teo:formula_convolution} provides a new general formula for the potential $\Ws\ast\mu_q^E$ inside $E$, valid for all $0<s<d$ and $q\geq d-2$. In particular, this extends the formula derived in \cite{RieszFMMRSV} to the full range of Riesz exponents in any space dimension. A Fourier representation of the potential $\Ws\ast\mu_q^E$ outside $E$ is unfortunately lacking for general $s$ and $q$. However, in the present setting of a physical confinement,
  only the expression of the potential inside $E$ is needed. For this reason the formula provided by Theorem~\ref{teo:formula_convolution} is sufficient to establish our result without any restrictions on the space dimension $d$ or the Riesz exponent $s\geq d-2$, unlike the case of quadratic attraction considered in \cite{RieszFMMRSV}, which remains open in high dimensions and for large Riesz exponents.
  
    The paper is structured as follows. In Section~\ref{sect-prelim} we collect some definitions and preliminary results that will be used throughout the paper. Section~\ref{sect:superC} is devoted to the proof of Theorem~\ref{thm:main}, where we obtain a characterization of the minimizer for $s\geq d-2$. The case $s<d-2$ is discussed in Section~\ref{Sec:counter}, where we provide an explicit example illustrating the role of anisotropy in the sub-Coulombic regime.

    \section{Preliminary results}\label{sect-prelim}
    In this section we collect some definitions and results that will be needed in the paper.

        \subsection{Some special functions}\label{sect:special}

        We recall the definition and the main properties of some special functions that will be used in the rest of the manuscript.
        For further details we refer to \cite{ERDELYI1953,LEBEDEV1965}. 

            \subsubsection*{The Gamma and the Beta functions}
            The Gamma function is defined as 
            \begin{equation}\nonumber
                \Gamma(z) = \int_{0}^{\infty} e^{-t}t^{z-1} \, dt 
            \end{equation}
            for $z\in\C$ with $\Re(z) > 0$.
            It can be extended by analytic continuation to the whole complex plane, except at non-positive integers.
            We will use the following properties of $\Gamma$:
            \begin{enumerate}[(i)]
                \item $\Gamma(z + 1) = z \Gamma(z)$ 
                for every $z\in\C$, $z\neq 0,-1,-2,\dots$ (fundamental property),\smallskip
                \item $\Gamma(z)\Gamma\left(z + \frac{1}{2}\right) = 2^{1-2z}\sqrt{\pi} \Gamma(2z)$ for every $z\in\C$, $z\neq 0,-\frac12, -1,-\frac32,\dots$ (Legendre duplication formula),\smallskip
                \item $\Gamma\left(\frac{1}{2}\right) = \sqrt{\pi}$.
            \end{enumerate}

            The Beta function can be defined in terms of the Gamma function as 
            \begin{equation}\label{Beta0}
                B(x, y) =\frac{\Gamma(x)\Gamma(y)}{\Gamma(x+y)} 
            \end{equation}
            for $x,y\in\C$ with $\Re(x)>0$ and $\Re(y)>0$.
            It easily follows that $B$ is symmetric. Moreover, for $\mu , \nu > 0$,
             \begin{equation}\label{Beta}
                    \int_{0}^{\frac{\pi }{2}} \sin^{\mu -1}(t)\cos^{\nu -1}(t) \, dt = \frac{1}{2} B\left(\frac{\mu }{2}, \frac{\nu }{2}\right),
                \end{equation}
            see \cite[Formula 3.621--5]{GRADSHTEYN2007}.

            \subsubsection*{The hypergeometric function}
            Let $\alpha,\beta,\gamma \in\C$ with $\gamma\neq0,-1,-2,\dots$.
            The hypergeometric function ${}_{2}F_{1}$ is defined as the power series
            \begin{equation}\nonumber
                {}_{2}F_{1}(\alpha, \beta; \gamma; z) = \sum_{n=0}^{\infty} \frac{(\alpha)_{n}(\beta)_{n}}{(\gamma)_{n}n!} z^{n}
            \end{equation}
            for $z\in \C$ with $|z|< 1$.
            Here $(\lambda)_{n}$ denotes the Pochhammer's symbol, namely
            \begin{equation}\nonumber
                (\lambda)_{0} = 1 , \qquad (\lambda)_{n} = \lambda(\lambda+1)\cdot\dots\cdot(\lambda+n-1) \quad \text{ for } n\in\N.
            \end{equation}
            If $-1<\Re(\gamma-\alpha-\beta)\leq 0$, then the series converges for $|z|\leq1$, except at the point $z=1$. More precisely,  
            we have the following behavior of the series at $z=1$.
            If $\gamma=\alpha+\beta$, 
            \begin{equation}\label{limz2} 
            \lim_{z\to 1^-}\frac{{}_2F_1(\alpha,\beta;\alpha+\beta;z)}{-\log(1-z)}=\frac{\Gamma(\alpha+\beta)}{\Gamma(\alpha)\Gamma(\beta)},
            \end{equation}
            while if $\Re(\gamma-\alpha-\beta)<0$,
            \begin{equation}\label{limz3}
            \lim_{z\to 1^-}\frac{{}_2F_1(\alpha,\beta;\gamma;z)}{(1-z)^{\gamma-\alpha-\beta}}=\frac{\Gamma(\gamma)\Gamma(\alpha+\beta-\gamma)}{\Gamma(\alpha)\Gamma(\beta)}.
            \end{equation}
            If instead $\Re(\gamma-\alpha-\beta)>0$, the series extends continuously also at $z=1$, and 
            \begin{equation}\label{limz1}
            \lim_{z\to 1^-}{}_2F_1(\alpha,\beta;\gamma;z)={}_2F_1(\alpha,\beta;\gamma;1)=\frac{\Gamma(\gamma)\Gamma(\gamma-\alpha-\beta)}{\Gamma(\gamma-\alpha)\Gamma(\gamma-\beta)}.
            \end{equation}
            For more details we refer to \cite[Chapter 9]{LEBEDEV1965}.

            Note that if $\alpha$ or $\beta$ is a non-positive integer, then the hypergeometric function reduces to a polynomial in $z$.
            In particular, it holds that
            \begin{equation}\label{2F10}
                {}_{2}F_{1}(0, \beta; \gamma; z) = 1
            \end{equation}
            and
            \begin{equation}\label{2F1-1}
                {}_{2}F_{1}(-1, \beta; \gamma; z) = 1-\frac{\beta}{\gamma}z.
            \end{equation}

            \subsubsection*{The Appell function of the fourth kind}
            Let $\alpha,\beta,\gamma,\gamma' \in\C$ with $\gamma,\gamma'\neq0,-1,-2,\dots$.
            The Appell function of the fourth kind $F_{4}$ is defined as the double power series
            \begin{equation}\nonumber
                F_{4}(\alpha, \beta; \gamma, \gamma'; x, y) = \sum_{n, m = 0}^{\infty} \frac{(\alpha)_{m + n}(\beta)_{m+n}}{(\gamma)_{m}(\gamma')_{n}n!m!} x^{m} y^{n}
            \end{equation}
            for $x,y\in\C$ with $\sqrt{|x|} + \sqrt{|y|} < 1$.
            Since $F_{4}$ is analytic in its domain, it follows that
            \begin{equation}\label{eq:limit_appel}
                \lim_{x \to 0} F_{4}(\alpha, \beta; \gamma, \gamma';x, y) = {}_{2} F_{1}(\alpha, \beta; \gamma'; y) 
            \end{equation}
            for every $y\in\C$ with $|y| < 1$.

            \subsubsection*{The Bessel function of the first kind}
            Let $\nu\geq0$.
            The Bessel function of the first kind of order $\nu$ is defined as 
            \begin{equation}\nonumber
                J_{\nu}(x) = \sum_{n=0}^{\infty} \frac{(-1)^{n}(x / 2)^{\nu+2n}}{\Gamma(n+1)\Gamma(n + \nu + 1)} 
            \end{equation}
            for $x \geq 0$.
            We recall the asymptotic behavior of $J_\nu$ at $0$ and $+\infty$:
            \begin{align}
                J_{\nu}(x) & \sim \frac{x^{\nu}}{2^{\nu}\Gamma(1+\nu)} , && \text{as } x \to 0^{+}, \label{eq:bessel_zero}\\
                J_{\nu}(x) & \sim \sqrt{\frac{2}{\pi x}} \cos\left(x-\frac{\pi}{2}\nu - \frac{\pi}{4}\right) , && \text{as } x \to+\infty, \label{eq:bessel_infinity}
            \end{align}
            see \cite[Section~5.16]{LEBEDEV1965}.

        \subsection{The Fourier transform}
        Let $\mathcal{S}(\R^d)$ denote the Schwartz class of rapidly decreasing functions and let $\mathcal{S}'(\R^d)$ be its dual space, that is, the space of tempered distributions. Following \cite{FOLLAND1992},
        we define the Fourier transform $\widehat f$ of a function $f \in \mathcal{S}(\R^d)$ as
        \begin{equation}\nonumber
            \widehat f (\xi) = \int_{\R^d} f(x)e^{- i x \cdot\xi} \, dx 
        \end{equation}
        for $\xi\in\R^d$. If $f$ is a radial function, that is, $f(x)=f_0(|x|)$ for some $f_0$, then so is $\widehat f$ and we have
        \begin{equation}\label{radFourier}
        \widehat{f}(\xi)=
        \frac{(2\pi)^{\frac{d}{2}}}{|\xi|^{\frac{d}{2}-1}} \int_{0}^{\infty} f_0(r) J_{\frac{d}{2}-1}(r|\xi|) r^{\frac{d}{2}} \, dr,
        \end{equation}
        see \cite[eq.~(7.38)]{FOLLAND1992}.

        If $f$ is a tempered distribution, its Fourier transform can be defined by duality, namely
        \begin{equation}\nonumber
            \langle \widehat f, \varphi \rangle = \langle f, \widehat \varphi \rangle
        \end{equation}
        for every $\varphi\in \mathcal{S}(\R^d)$. 
        By the inversion theorem for the Fourier transform, if $f$ is a tempered distribution such that $f\in L^{1}(\R^d)$ and $\widehat f \in L^{1}(\R^d)$, then
        \begin{equation}\nonumber
            f(x) = \frac{1}{(2\pi)^{d}} \int_{\R^d} \widehat f(\xi) e^{ix\cdot \xi} \, d\xi
        \end{equation}
        for a.e.\ $x\in\R^d$,
        see for example \cite[p.~244]{FOLLAND1992}.

        Let $\mu$ be a Radon measure in $\R^d$ with compact support. It is easy to see that $\mu \in \mathcal{S}'(\R^d)$
        and $\widehat \mu \in C^{0}(\R^d)$.
        Moreover, if $f \in \mathcal{S}'(\R^d)$, the convolution $f \ast \mu$ is well-defined as an element of $\mathcal{S}'(\R^d)$ 
        and $\widehat{f \ast \mu} = \widehat f \, \widehat \mu$, see \cite[Theorem~1.7.6]{Hor}.

        \subsection{The Fourier transform of $\Ws$}\label{Fourier-Ws}
        Let $\Ws$ be a kernel as in Theorem~\ref{thm:main}. Since $\Ws$ is locally integrable and with sublinear growth at infinity, it is a tempered distribution.
        To compute the Fourier transform of $\Ws$, it is convenient to write the profile $\Phi\in L^2(\sph^{d-1})$ in terms of spherical harmonics, namely
        $$\Phi=\sum_{n=0}^{\infty}\phi_n,$$
        where each $\phi_n$ is a spherical harmonic of order $n$ on $\sph^{d-1}$.
        Then for $x\in \R^d$
        $$\Ws(x)= \sum_{n=0}^{\infty}\frac{1}{|x|^s}\,\phi_n\bigg(\frac{x}{|x|}\bigg).$$ 
        By \cite[Chapter V, Lemma~2]{Stein} for $k=0$ and \cite[Chapter III, Theorem~5]{Stein} for $k\geq 1$, the Fourier transform of $p_{k}(x)|x|^{-s-k}$ with $p_k$
            a homogeneous harmonic polynomial of degree $k$ is given by    
            \begin{equation}\label{Stein-F}
                (-i)^{k}2^{d-s}\pi^{\frac{d}{2}}\frac{\Gamma (\frac{k+d-s}{2})}{\Gamma (\frac{k+s}{2})} \frac{p_{k}(\xi)}{|\xi|^{k+d-s}}.  
            \end{equation}
        We infer that for $\xi\in \R^d$, for suitable constants $b_{n,s,d}$,
        \begin{equation}\label{hatW}
        \widehat{\Ws}(\xi)=\frac{1}{|\xi|^{d-s}}\sum_{n=0}^{\infty}b_{n,s,d} \,\phi_n(\xi),
        \end{equation}
        provided the series at the right-hand side converges in $L^2(\sph^{d-1})$ to a function which we denote by $\Psi:=\widehat{\Ws}|_{\sph^{d-1}}$.
        In Theorem~\ref{thm:main} such a function $\Psi$ is assumed to be continuous on $\sph^{d-1}$.
        Note that, since $\Phi$ is even, $\Psi$ is even and real-valued.

        \subsection{Existence and uniqueness of the minimizer}
        The following result is by now standard.
        \begin{teo}\label{thm:exun}
        Let $\Ws$ be a kernel of the form \eqref{eq:definition_W} with $s\in(0,d)$ and let $\Phi:{\mathbb S}^{d-1}\to\R$ be a continuous, even, and strictly positive function. Assume that $\widehat \Ws$
        is continuous and non-negative. 
        Let $E\subset \R^d$ be a compact set of positive $(d-s)$-capacity.
        Then
        the functional $I_{s}^{E}$ has a unique minimizer $\mu$ over $\mathcal{P}(\R^d)$. Moreover, $\mu$ is the unique measure in $\mathcal{P}(\R^d)$ satisfying the following Euler--Lagrange conditions: there exists a constant $C\in\R$ such that
        \begin{align}
         & \supp\mu \subset  E, \label{eq:third_el_condition}\tag{EL1}\\
         &  (\Ws \ast \mu)(x)  = C \quad \text{ for } \mu  {-a.e.\ } x \in \supp\mu,\label{eq:first_el_condition}\tag{EL2}\\
           & (\Ws \ast \mu)(x)  \geq C \quad \text{ for every }x \in  E \setminus N \text{ with } {\rm Cap}_{d-s}(N) = 0.\label{eq:second_el_condition}\tag{EL3}
            \end{align}
        \end{teo}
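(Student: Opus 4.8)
The plan is to follow the classical variational route of potential theory, with the anisotropic kernel handled through its Fourier transform. \emph{Existence.} Since the confinement term forces $\supp\mu\subset E$, minimizing $I_s^E$ over $\prob(\R^d)$ coincides with minimizing $I_s$ over the set $K$ of $\mu\in\prob(\R^d)$ with $\supp\mu\subset E$, which is weak-$*$ compact because $E$ is compact. The kernel $\Ws$ is lower semicontinuous and non-negative on $\R^d$, so $\mu\mapsto I_s(\mu)$ is weak-$*$ lower semicontinuous on $K$: one truncates $\Ws$ at height $k$, uses that $\min(\Ws,k)$ is bounded and continuous so that $\iint\min(\Ws,k)\,d\mu_n\,d\mu_n$ passes to the limit along a weak-$*$ convergent sequence, and then lets $k\to\infty$ by monotone convergence. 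Finally, the two-sided bound $(\min_{\sph^{d-1}}\Phi)\,|x|^{-s}\le\Ws(x)\le(\max_{\sph^{d-1}}\Phi)\,|x|^{-s}$ together with $\mathrm{Cap}_{d-s}(E)>0$ provides a measure $\mu_0\in K$ with $I_s(\mu_0)<\infty$; hence the infimum is finite and attained.

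\emph{Positive definiteness and uniqueness.} The core estimate is $I_s(\sigma):=\iint\Ws(x-y)\,d\sigma(x)\,d\sigma(y)\ge0$ for every compactly supported signed measure $\sigma$ with $\sigma(\R^d)=0$ and $I_s(|\sigma|)<\infty$, with equality only if $\sigma=0$. I would obtain it by mollification: for a standard mollifier $\varphi_\varepsilon$, Plancherel gives $\iint(\varphi_\varepsilon\ast\varphi_\varepsilon\ast\Ws)(x-y)\,d\sigma(x)\,d\sigma(y)=(2\pi)^{-d}\int|\widehat{\varphi_\varepsilon}|^2\,\widehat{\Ws}\,|\widehat\sigma|^2\ge0$ thanks to $\widehat{\Ws}\ge0$, and letting $\varepsilon\to0^+$ (using the local integrability of $\Ws$) yields $I_s(\sigma)\ge0$. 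If $I_s(\sigma)=0$, the same limiting argument forces $\widehat\sigma\equiv0$ on the open cone where $\widehat{\Ws}>0$ — non-empty because the spherical average of $\Psi=\widehat{\Ws}|_{\sph^{d-1}}$ equals $b_{0,s,d}$ times the strictly positive average of $\Phi$ — and since $\widehat\sigma$ is real-analytic, being the Fourier transform of a compactly supported distribution, it must vanish identically, so $\sigma=0$. Uniqueness of the minimizer then follows from the parallelogram identity $I_s\big(\tfrac{\mu_1+\mu_2}{2}\big)=\tfrac12 I_s(\mu_1)+\tfrac12 I_s(\mu_2)-\tfrac14 I_s(\mu_1-\mu_2)$ applied to any two minimizers.

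\emph{Euler--Lagrange conditions: necessity.} Let $\mu$ be the minimizer and $C:=I_s(\mu)$. For $\nu\in K$ with $I_s(\nu)<\infty$, the convexity along $\mu_t=(1-t)\mu+t\nu$ gives $0\le\frac{d}{dt}\big|_{t=0^+}I_s(\mu_t)=2\big(\int(\Ws\ast\mu)\,d\nu-C\big)$, hence $\int(\Ws\ast\mu)\,d\nu\ge C$ for all such $\nu$. If the Borel set $\{x\in E:(\Ws\ast\mu)(x)\le C-\varepsilon\}$ had positive $(d-s)$-capacity for some $\varepsilon>0$, testing against a finite-energy probability measure concentrated on it would contradict this inequality; since capacity is countably subadditive, it follows that $\Ws\ast\mu\ge C$ quasi-everywhere on $E$, which is (EL3) (here one uses that $\Ws\ast\mu$ is lower semicontinuous, hence Borel). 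As $I_s(\mu)<\infty$ forces $\mu$ to vanish on sets of zero $(d-s)$-capacity, we get $\Ws\ast\mu\ge C$ $\mu$-a.e.; combined with $\int(\Ws\ast\mu)\,d\mu=I_s(\mu)=C$ this gives $\Ws\ast\mu=C$ $\mu$-a.e., i.e.\ (EL2), while (EL1) holds by construction.

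\emph{Sufficiency and the main difficulty.} Conversely, if $\mu$ satisfies (EL1)--(EL3) with constant $C$, then for every $\nu\in K$ with $I_s(\nu)<\infty$ the identity $I_s(\nu)-I_s(\mu)=I_s(\nu-\mu)+2\big(\int(\Ws\ast\mu)\,d\nu-\int(\Ws\ast\mu)\,d\mu\big)\ge0+2(C-C)=0$, valid by the positive definiteness established above and by (EL2)--(EL3), shows that $\mu$ minimizes $I_s^E$; combined with the uniqueness already proved, $\mu$ is then the only measure satisfying (EL1)--(EL3). The step that I expect to require the most care is the rigorous passage to the Fourier side: because $\widehat{\Ws}$ is not integrable at infinity and $\widehat\sigma$ is merely bounded, the identity $I_s(\sigma)=(2\pi)^{-d}\int\widehat{\Ws}\,|\widehat\sigma|^2$ has to be extracted through the mollification and monotone-limit procedure, and one must additionally verify that the sets of zero $(d-s)$-capacity coincide with the $\Ws$-energy null sets, so that the statements in (EL2)--(EL3) are meaningful — both facts being classical but relying on the two-sided comparison of $\Ws$ with the Riesz kernel $|x|^{-s}$.
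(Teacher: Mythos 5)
Your proposal is correct and follows essentially the same route the paper relies on: the classical potential-theoretic scheme (weak-$*$ compactness and lower semicontinuity for existence, positivity of $\widehat\Ws$ via a mollification/Plancherel argument for strict convexity and uniqueness, and capacity-based first-variation arguments for the necessity and sufficiency of (EL1)--(EL3)), which is exactly the content of \cite[Proposition~2.1]{RieszFMMRSV} that the paper adapts. The paper's own proof consists only of that citation together with the observation that the confinement $V_E$ makes tightness of minimizing sequences and compactness of the support immediate — a point your argument handles implicitly by restricting to measures supported in the compact set $E$.
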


        We refer to \cite{LANDKOF1972} for the notion of $(d-s)$-capacity.

        \begin{proof}[Proof of Theorem~\ref{thm:exun}] The proof is an adaptation of \cite[Proposition~2.1]{RieszFMMRSV}. Note that the tightness of minimizing sequences, as well as the compactness of the support of the minimizer, are immediate consequences of the presence of the confinement potential $V_E$ in the functional $I_s^E$. 
        \end{proof}

    \section{Characterization of the minimizer for $s \geq d-2$}\label{sect:superC}

    In this section we prove Theorem~\ref{thm:main}. 
    A general ellipsoid in $\R^d$ centered at the origin can be described as $E=RDB_1$, where $R \in \SO(d)$, $D$ is a positive-definite $d\times d$ diagonal matrix, and $B_1$ is the closed unit ball centered at the origin.
    Given an ellipsoid $E = RDB_1$, we define the linear map $T^{E}(x) = RDx$ for $x\in\R^d$.

    For any $q\geq d-2$, we define
    \begin{equation}\label{muq:def}
        \mu_{ q} = \begin{cases}
            c_{q, d}(1-|x|^{2})^{\frac{q-d}{2}} \mathcal{L}^{d}(x) \resmes B_{1} \quad & \text{if } q> d-2,\smallskip\\
            c_{q, d} \mathcal{H}^{d-1} \resmes \partial B_{1} \quad & \text{if } q= d-2,
            \end{cases}
    \end{equation}
    where $c_{q,d}$ is a normalization constant so that $\mu_{q}$ is a probability measure. Namely, using for instance \cite[Formula 3.251--1]{GRADSHTEYN2007}, 
    \begin{equation}\label{cuq:def}
        c_{q, d} = \begin{cases} 
            \displaystyle |B_1|^{-1}\frac{\Gamma(1+\frac{q}{2})}{\Gamma(1+\frac d2)\Gamma(1+\frac{q-d}{2})}  =\pi^{-\frac{d}{2}}\frac{\Gamma(1+\frac{q}{2})}{\Gamma(1+\frac{q-d}{2})} \quad & \text{if } q>d-2,\smallskip\\
            \displaystyle  \big(\mathcal{H}^{d-1}(\partial B_1)\big)^{-1} = \pi^{-\frac{d}{2}}\frac{\Gamma(\frac{d}{2})}{2 } \quad & \text{if } q= d-2 .
            \end{cases}
    \end{equation}
     For $s=q\in [d-2,d)\cap (0,d)$, $\mu_q$ clearly coincides with $\mu_{\iso,s}$. The push-forward of $\mu_q$ by the map $T^E$ is given by
      \begin{equation}\nonumber
    \mu_{q}^E = \begin{cases}
            |E|^{-1}\dfrac{\Gamma(1+\frac{q}{2})}{\Gamma(1+\frac d2)\Gamma(1+\frac{q-d}{2})}(1-|D^{-1}R^Tx|^{2})^{\frac{q-d}{2}} \mathcal{L}^{d}(x) \resmes E \quad & \text{if } q>d-2,\medskip\\
            \big(\mathcal{H}^{d-1}(\partial B_1)\det D\big)^{-1}|D^{-2}R^Tx|^{-1}  \mathcal{H}^{d-1}(x) \resmes \partial E \quad & \text{if } q=d-2.
            \end{cases}
    \end{equation}
    We start this section by computing the Fourier transform of the measure $\mu_{q}$ for $q\geq d-2$ and of its push-forward $\mu_{q}^{E}$ by the map $T^E$.
    Note that $\mu_{ q}$ and $\mu_{q}^{E}$ are Radon measures with compact support, hence they are tempered distributions.

    \begin{lemma}\label{lemma:fourier_mus}
        Let $q \geq d-2$, and let $\mu_q\in \mathcal{P}(\R^d)$ be defined as in \eqref{muq:def}. Then
        \begin{equation}\nonumber
            \widehat \mu_{q}(\xi) = \hat c_{q}\frac{1}{|\xi|^{\frac{q}{2}}}J_{\frac{q}{2}}(|\xi|)  ,
        \end{equation}
        where
        \begin{equation}\nonumber
            \hat c_{q} = 2^{\frac{q}{2}} \Gamma\left(1+\frac{q}{2}\right).
        \end{equation}
        Moreover, if $E$ is an ellipsoid of the form $E = RDB_1$ with $R \in \SO(d)$ and $D$ a positive-definite $d\times d$ diagonal matrix, then
        \begin{equation}\label{cor:fourier_muEs}
            \widehat \mu_{q}^{E}(\xi) = \widehat \mu_{ q} (DR^{T}\xi) .
        \end{equation}
    \end{lemma}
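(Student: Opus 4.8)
The plan is to exploit the radial symmetry of $\mu_q$ together with the radial Fourier formula \eqref{radFourier}, treating the density case $q>d-2$ and the surface-measure case $q=d-2$ separately, and then to deduce \eqref{cor:fourier_muEs} from the behaviour of the Fourier transform under linear changes of variables.

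For $q>d-2$ the measure $\mu_q$ is the radial $L^1$ function with profile $f_0(r)=c_{q,d}(1-r^2)^{\frac{q-d}{2}}$ for $0\le r\le 1$ and $f_0(r)=0$ for $r>1$, so \eqref{radFourier} gives
\begin{equation}\nonumber
\widehat\mu_q(\xi)=\frac{(2\pi)^{\frac d2}}{|\xi|^{\frac d2-1}}\,c_{q,d}\int_0^1 (1-r^2)^{\frac{q-d}{2}}J_{\frac d2-1}(r|\xi|)\,r^{\frac d2}\,dr .
\end{equation}
The inner integral is a classical finite integral of Sonine type (see e.g.\ \cite[Formula 6.567--1]{GRADSHTEYN2007}): for $\Re\mu>-1$ and $\Re\nu>-1$,
\begin{equation}\nonumber
\int_0^1(1-r^2)^\mu J_\nu(ar)\,r^{\nu+1}\,dr=2^\mu\Gamma(\mu+1)\,a^{-\mu-1}J_{\nu+\mu+1}(a).
\end{equation}
I would apply this with $\nu=\frac d2-1$, $\mu=\frac{q-d}{2}$ (admissible precisely because $q>d-2$) and $a=|\xi|$, so that $\nu+\mu+1=\frac q2$. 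Substituting, the powers of $|\xi|$ combine to $|\xi|^{-q/2}$; inserting the explicit value of $c_{q,d}$ from \eqref{cuq:def}, the factor $\Gamma\bigl(1+\frac{q-d}{2}\bigr)$ produced by the integral cancels the one in the denominator of $c_{q,d}$, and the remaining constant is $(2\pi)^{d/2}\pi^{-d/2}2^{\frac{q-d}{2}}\Gamma\bigl(1+\frac q2\bigr)=2^{q/2}\Gamma\bigl(1+\frac q2\bigr)=\hat c_q$, which yields the claimed formula.

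For $q=d-2$ the integral formula degenerates at the endpoint $\mu=-1$, and $\mu_{d-2}$ is the normalized surface measure on $\partial B_1$; here I would instead use the standard identity $\int_{\partial B_1}e^{-ix\cdot\xi}\,d\mathcal H^{d-1}(x)=(2\pi)^{\frac d2}|\xi|^{1-\frac d2}J_{\frac d2-1}(|\xi|)$ and multiply by $c_{d-2,d}$ from \eqref{cuq:def}. Since $\frac q2=\frac d2-1$ in this case, the constant $c_{d-2,d}(2\pi)^{d/2}$ equals $2^{\frac d2-1}\Gamma\bigl(\frac d2\bigr)=2^{q/2}\Gamma\bigl(1+\frac q2\bigr)=\hat c_{d-2}$, giving the claim. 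Alternatively, one can note that $\mu_q\todebstar\mu_{d-2}$ as $q\to(d-2)^+$, so that $\widehat\mu_q(\xi)\to\widehat\mu_{d-2}(\xi)$ for every $\xi$ (all measures being supported in $B_1$), while the right-hand side of the asserted formula is continuous in $q$; this reduces the endpoint case to the previous one.

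Finally, for the push-forward, since $T^E(x)=RDx$ is linear we compute, for every $\xi\in\R^d$,
\begin{equation}\nonumber
\widehat{\mu_q^E}(\xi)=\int_{\R^d}e^{-i(RDx)\cdot\xi}\,d\mu_q(x)=\int_{\R^d}e^{-ix\cdot(DR^T\xi)}\,d\mu_q(x)=\widehat\mu_q(DR^T\xi),
\end{equation}
using that $(RD)^T=DR^T$ because $D$ is diagonal and $R$ orthogonal; this is exactly \eqref{cor:fourier_muEs}. The computation presents no essential obstacle; the only point requiring care is that the Sonine-type formula breaks down exactly at $q=d-2$, which is why the surface-measure case must be dispatched on its own (or via the continuity argument).
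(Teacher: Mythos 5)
Your proposal is correct and follows essentially the same route as the paper: the radial Fourier formula \eqref{radFourier} combined with \cite[Formula 6.567--1]{GRADSHTEYN2007} for $q>d-2$, the classical Fourier transform of the normalized surface measure for $q=d-2$, and the linear change of variables $(RD)^T=DR^T$ for the push-forward identity \eqref{cor:fourier_muEs}; your constant bookkeeping matches the paper's. The additional weak-$*$ limiting argument for the endpoint case is a harmless (and valid) alternative, but not needed.
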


    \begin{proof}
        We start with the case $q > d-2$.
        Applying the formula \eqref{radFourier} for the Fourier transform of a radial function  we get
        \begin{equation}\label{eq:radial_fourier_mus}
            \widehat \mu_{ q}(\xi) = c_{q, d} \frac{(2\pi)^{\frac{d}{2}}}{|\xi|^{\frac{d}{2}-1}} \int_{0}^{1} (1 - r^{2})^{\frac{q-d}{2}} J_{\frac{d}{2}-1}(r|\xi|) r^{\frac{d}{2}} \, dr.
        \end{equation}
        To compute the integral in \eqref{eq:radial_fourier_mus} we use the identity
    \begin{equation}\label{eq:formula65671}
            \int_{0}^{1}x^{\nu+1}(1-x^{2})^{\rho}J_{\nu}(bx)\, dx = \frac{2^{\rho}\Gamma(\rho + 1)}{b^{\rho+1}} J_{\nu+\rho+1}(b),
        \end{equation}
        which holds for $b>0$, $\Re(\nu) > -1$, and $\Re(\rho) > -1$ (see \cite[Formula 6.567--1]{GRADSHTEYN2007}).
           Applying \eqref{eq:formula65671} with $\nu = \frac{d}{2}-1$, $\rho = \frac{q-d}{2}$, and $b = |\xi|$, we then obtain from \eqref{eq:radial_fourier_mus} that
        \begin{align*}
            \widehat \mu_{ q}(\xi)  = c_{q, d} \pi^{\frac{d}{2}}2^{\frac{q}{2}}\Gamma\left(1+\frac{q-d}{2}\right)\frac{1}{|\xi|^{\frac{q}{2}}}J_{\frac{q}{2}}(|\xi|)  = 2^{\frac{q}{2}}\Gamma\left(1+\frac{q}{2}\right) \frac{1}{|\xi|^{\frac{q}{2}}} J_{\frac{q}{2}}(|\xi|),
        \end{align*}
        which proves the first claim in the statement.
        For a similar computation see also \cite[Appendix B.5]{GRAFAKOS2008}, where a slightly different definition of the Fourier transform is used.

        When $q = d-2$, we follow the computation in \cite[Appendix B.4]{GRAFAKOS2008} and obtain
        \begin{align*}
            \widehat \mu_{d-2}(\xi)  = c_{d-2, d} \int_{ \partial B_{1}} e^{-i\xi\cdot \omega} \, d\mathcal{H}^{d-1}(\omega) 
            = 2^{\frac{d}{2}-1} \Gamma\left(\frac{d}{2}\right)\frac{1}{|\xi|^{\frac{d}{2}-1}}J_{\frac{d}{2}-1}(|\xi|).
             \end{align*}
        
        Finally, \eqref{cor:fourier_muEs} follows from the linearity and invertibility of the map $T^{E}$.
    \end{proof}

    The key ingredient for the proof of our main result, Theorem~\ref{thm:main}, is a formula for the expression of the potential function $\Ws \ast \mu^{E}_{q}$
    inside $E$ when $s \in (0, d)$ and $q\in [d-2,+\infty)$.
    As a first step, in the next technical lemma we study the regularity of $\Ws \ast \mu^{E}_{q}$.

    \begin{lemma}\label{regular_lemm}
        Let $s\in(0,d)$ and $q\in[d-2,+\infty)$. Let $\Ws$ be as in \eqref{eq:definition_W}, let $E = RDB_1$ be an ellipsoid, and let $\mu_{q}^{E}$ be the push-forward of the measure $\mu_{q}$ by the map $T^{E}$.
        Then $\Ws \ast \mu_{q}^{E}\in L^1_{\loc}(\mathbb{R}^d)\cap C^0(\R^d\setminus \partial E)$. Moreover, if $0<s<\min(d,\frac{q+d}2)$, then $\Ws \ast \mu_{q}^{E}\in C^0(\R^d)$.
    \end{lemma}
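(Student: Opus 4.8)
The plan is to reduce everything to elementary convolution estimates based on the pointwise bound $|\Ws(z)|\le\|\Phi\|_\infty|z|^{-s}$, which holds because $\Phi$ is continuous, hence bounded, on the compact sphere $\sph^{d-1}$. The local integrability is then immediate: since $s<d$ the function $z\mapsto|z|^{-s}$ is locally integrable, with $\sup_{y\in\R^d}\int_K|x-y|^{-s}\,dx<\infty$ for every compact $K\subset\R^d$, so, $\mu_q^E$ being a probability measure, Tonelli's theorem gives
\[
\int_K|\Ws\ast\mu_q^E|\,dx\le\|\Phi\|_\infty\int\Big(\int_K|x-y|^{-s}\,dx\Big)\,d\mu_q^E(y)<\infty,
\]
whence $\Ws\ast\mu_q^E\in L^1_\loc(\R^d)$ for every $s\in(0,d)$ and $q\ge d-2$, with no further hypothesis. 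All the remaining (continuity) statements I would deduce from control of the \emph{singular part}
\[
\omega_\nu(\varepsilon):=\sup_{z\in\R^d}\int_{B_\varepsilon(z)}|z-y|^{-s}\,d\nu(y),
\]
evaluated at the relevant restriction $\nu$ of $\mu_q^E$. Writing $\Ws\ast\nu(x)-\Ws\ast\nu(x')=\int(\Ws(x-y)-\Ws(x'-y))\,d\nu(y)$ and splitting the domain according to whether $y\in B_{2\varepsilon}(x)$ or not, the first part is bounded by $C\,\omega_\nu(3\varepsilon)$ (using $|\Ws|\le\|\Phi\|_\infty|\cdot|^{-s}$), while the second is bounded by $\nu(\R^d)$ times the modulus of continuity of $\Ws$ on the compact annulus $\{\varepsilon\le|z|\le R\}$, on which $\Ws$ is uniformly continuous since it is continuous on $\R^d\setminus\{0\}$. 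Hence, whenever $\omega_\nu(\varepsilon)\to0$ as $\varepsilon\to0^+$, the potential $\Ws\ast\nu$ is continuous.

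The heart of the matter is then a uniform ball-growth estimate for $\mu_q^E$. Since $T^E=RD$ is bi-Lipschitz, one has $1-|D^{-1}R^Ty|^2\ge\|D\|^{-1}\dist(y,\partial E)$ for $y\in E$, so the density of $\mu_q^E$ with respect to $\mathcal L^d\resmes E$ (when $q>d-2$) is bounded by $C\,\dist(\cdot,\partial E)^{(q-d)/2}$, and it is bounded outright when $q\ge d$; when $q=d-2$ the measure $\mu_q^E$ equals $\mathcal H^{d-1}\resmes\partial E$ times a bounded density. Because $\partial E$ is a smooth compact hypersurface, standard tubular-neighborhood (coarea) estimates — using $\int_0^r t^{(q-d)/2}\,dt<\infty$, which holds precisely because $(q-d)/2>-1$ — yield constants $r_0,C>0$ depending only on $E$ and $q$ such that
\[
\mu_q^E(B_r(z))\le C\,r^{\alpha}\quad\text{for all }z\in\R^d,\ 0<r\le r_0,\qquad\text{where }\alpha:=\min\Big(d,\tfrac{q+d}{2}\Big).
\]
For $q\ge d$ this is just the bound on the density times $|B_r|$; for $q=d-2$ it is $\mathcal H^{d-1}(\partial E\cap B_r(z))\le Cr^{d-1}$, and $d-1=\tfrac{q+d}{2}$; for $d-2<q<d$ one separates the part of $B_r(z)\cap E$ on which $\dist(\cdot,\partial E)\ge r$ (there the density is $\le Cr^{(q-d)/2}$) from its complement, which is a slice of width $r$ of the tube around $\partial E$ and is estimated by integrating $t^{(q-d)/2}$ across it, giving $r^{d-1}\cdot r^{(q-d)/2+1}=r^{(q+d)/2}$.

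A dyadic decomposition of $B_{2\varepsilon}(z)$ into the annuli $\{2^{-j}\varepsilon\le|z-y|<2^{1-j}\varepsilon\}$ then gives, for $\varepsilon$ small,
\[
\omega_{\mu_q^E}(2\varepsilon)\le C\sum_{j\ge0}(2^{-j}\varepsilon)^{-s}\,\mu_q^E\big(B_{2^{1-j}\varepsilon}(z)\big)\le C\sum_{j\ge0}(2^{-j}\varepsilon)^{-s}(2^{1-j}\varepsilon)^{\alpha}\le C'\varepsilon^{\alpha-s},
\]
which tends to $0$ as $\varepsilon\to0^+$ exactly when $s<\alpha=\min(d,\tfrac{q+d}{2})$; combined with the reduction above, this proves $\Ws\ast\mu_q^E\in C^0(\R^d)$ under the stated hypothesis. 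For a general exponent $s\in(0,d)$ one still obtains continuity at any $x_0\in\R^d\setminus\partial E$: either $\mu_q^E$ vanishes on some ball $B_{r_0}(x_0)$ (when $x_0\notin\overline E$, and also when $q=d-2$, since then $\supp\mu_q^E=\partial E$), or $q>d-2$ and $x_0\in\inter E$, in which case the density of $\mu_q^E$ is bounded by some $M<\infty$ on $B_{r_0}(x_0)$; in both cases $\sup_{x\in B_{r_0/2}(x_0)}\int_{B_\varepsilon(x)}|x-y|^{-s}\,d\mu_q^E(y)\le C\,M\,\varepsilon^{d-s}\to0$ using only $s<d$, and the splitting argument shows $\Ws\ast\mu_q^E$ is continuous at $x_0$. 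This gives $\Ws\ast\mu_q^E\in L^1_\loc(\R^d)\cap C^0(\R^d\setminus\partial E)$, as claimed.

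I expect the real work to be the uniform ball-growth bound $\mu_q^E(B_r(z))\le Cr^\alpha$ in the regime $d-2<q<d$, where the density genuinely blows up at $\partial E$: one must combine the precise blow-up rate $\dist(\cdot,\partial E)^{(q-d)/2}$ with the tubular-neighborhood geometry of $\partial E$ \emph{uniformly} in the centre $z$ (in particular for $z\in\partial E$ and for $z$ just outside $E$), and it is exactly here that the threshold $\tfrac{q+d}{2}$, together with the hypothesis $s<\tfrac{q+d}{2}$ ensuring $\alpha-s>0$, enters. Everything else — the $L^1_\loc$ bound, the split into singular and regular parts, and the dyadic summation — is routine.
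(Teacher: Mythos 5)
Your argument is correct, but it follows a genuinely different route from the paper. The paper (which itself only sketches the case $E=B_1$, $\Phi\equiv 1$) computes the potential explicitly: it writes $(\Ws\ast\mu_q)(te_1)$ as a radial integral of the single-sphere potential $v_s(t,r)$, evaluates $v_s$ in closed form via \cite[Formula 3.665--2]{GRADSHTEYN2007} as a Gauss hypergeometric function, and then uses the asymptotics \eqref{limz2}--\eqref{limz1} of ${}_2F_1$ at $z=1$ together with a careful splitting of the one-dimensional integral near $r=1$ (the terms $I_1,\dots,I_4$) to read off local integrability and continuity across $\partial B_1$; the threshold $\min(d,\frac{q+d}2)$ emerges from the exponent bookkeeping in those estimates. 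You instead bound $|\Ws(z)|\le\|\Phi\|_\infty|z|^{-s}$ and reduce everything to the uniform mass-growth estimate $\mu_q^E(B_r(z))\le Cr^{\alpha}$ with $\alpha=\min(d,\frac{q+d}2)$, obtained from the density blow-up rate $\dist(\cdot,\partial E)^{(q-d)/2}$ and tubular-neighborhood estimates for the smooth compact hypersurface $\partial E$, then conclude by a dyadic decomposition plus the standard ``small singular part $+$ uniformly continuous far part'' continuity lemma; your localized version of the same splitting handles $C^0(\R^d\setminus\partial E)$ and Tonelli gives $L^1_{\loc}$. Both arguments are sound and land on the same threshold. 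Your approach buys generality and transparency: it treats arbitrary $E=RDB_1$ and arbitrary continuous $\Phi$ directly (the paper's sketch is isotropic and on the ball), it gives a quantitative modulus $\omega_{\mu_q^E}(\varepsilon)\lesssim\varepsilon^{\alpha-s}$, and it explains $\min(d,\frac{q+d}2)$ as a mass-growth exponent; the remaining work you correctly flag (uniformity of the ball-growth bound in the centre $z$, including $z$ on and just outside $\partial E$) is standard for an ellipsoid boundary. The paper's explicit special-function route buys, as a by-product, the sharper information that for $q=d-2$ and $d-1<s<d$ the potential actually blows up on $\partial B_1$ (so the continuity range is sharp there), and it stays within the hypergeometric machinery used in Theorem~\ref{teo:formula_convolution}; your method only asserts sufficiency, which is all the lemma claims.
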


    \begin{proof}
        We only sketch the proof in the case $E=B_1$ and $\Phi\equiv 1$, and for convenience we ignore the normalization constant $c_{q,d}$.
        It is immediate to show that, for any $0<s<d$ and $q > d-2$, we have $\Ws \ast \mu_{q}\in L^1_{\loc}(\mathbb{R}^d)\cap C^0(\R^d\setminus \partial B_1)$. 
        The continuity in $\R^d\setminus \partial B_1$ also holds for $q=d-2$. We focus on the behavior of the potential across $\partial B_1$. 
        To this aim we consider, with no loss of generality, $(\Ws \ast \mu_{q})(t e_1)$ for $t$ close to $1$. 
        We use the compact notation 
        \begin{equation}\label{pot:te1}
            (\Ws \ast \mu_{q})(t e_1)=
            \begin{cases}
                \smallskip 
                v_s(t,1) \quad &\text{if } q=d-2,\\
                \displaystyle\int_0^1v_s({\color{blue}t,r})(1-r^2)^{\frac{q-d}2}\,dr \quad &\text{if } q>d-2,
            \end{cases}
        \end{equation}
        where 
        \[
            v_s(t,r):=\int_{\partial B_r} \frac{1}{|te_1-y|^s}\,d\mathcal{H}^{d-1}(y),
        \]
        for $t,r>0$ and $t\neq r$.
        By setting $y=r\omega$ and using spherical coordinates on $\partial B_1$ we have
        \begin{align*}
        v_s(t,r)&=C(d)r^{d-1}\int_0^\pi
        \frac{\sin^{d-2}(\varphi)}{(t^2+r^2-2rt\cos(\varphi))^{s/2}}\,d\varphi\\
        &=\frac{C(d)r^{d-1}}{h^s}\int_0^\pi
        \frac{\sin^{d-2}(\varphi)}{(1+\alpha^2-2\alpha\cos(\varphi))^{s/2}}\,d\varphi,
        \end{align*}
        where $h=\max(t,r)$, $\alpha=\min(t,r)/h\in {\color{blue}(0,1)}$, and $C(d)>0$ is a dimensional constant. 
        The integral above can be computed explicitly (see \cite[Formula 3.665--2]{GRADSHTEYN2007} with $\mu=\frac{d-1}2$, $\nu=\frac s2$, $a =\alpha$), and gives 
        \begin{equation}\label{vBF}
        v_s(t,r)=\frac{C(d)r^{d-1}}{h^s} B\left(\frac{d-1}2,\frac12\right)\, {}_{2}F_{1}\left(\frac s2,\frac{s-d+2}{2};\frac d2;\alpha^2\right).
        \end{equation}
        We now treat the cases $0<s<d-1$ and $d-1\leq s<d$ separately.

        Let $0<s<d-1$.
        As recalled in Section \ref{sect:special}, ${}_{2}F_{1}$ is continuous with respect to the last variable in the interval $[0,1]$ whenever $\frac d2-\frac s2-\frac{s-d+2}{2}>0$, namely for $0<s<d-1$.
        Hence, by \eqref{vBF} $v_s$ is continuous in $t$.
        For $q= d-2$, this immediately provides the continuity of the potential in the whole $\R^d$. For $q>d-2$ the continuity follows by \eqref{pot:te1}, \eqref{vBF}, and the Dominated Convergence Theorem.
        We note that for $0<s<d-1$ and $q\geq d-2$, we have
    $\min(d,\frac{q+d}2)\geq d-1>s$.

        Let now $d-1<s<d$. In this range of Riesz exponents, by \eqref{limz3}, 
        \begin{equation}\label{asy:F21}
        {}_{2}F_{1}\left(\frac s2,\frac{s-d+2}{2};\frac d2;\alpha^2\right)
        \sim (1-\alpha^2)^{d-s-1} \quad \text{for } \alpha\sim 1.
        \end{equation}
        If $q=d-2$, the asymptotics in \eqref{asy:F21}, with \eqref{pot:te1}--\eqref{vBF} and the fact that $d-s-1>-1$, implies that $\Ws \ast \mu_{q}$ is integrable in a neighborhood of $\partial B_1$, hence it is locally integrable in $\R^d$. 
        The same asymptotics shows that the potential blows up on $\partial B_1$, thus it is not continuous on $\R^d$. Note that for $d-1<s<d$ and $q=d-2$, we have $\min(d,\frac{q+d}2)= d-1<s$.

         Let now $q>d-2$. Let $0<\varepsilon<1/4$, and assume that $|t-1|<\varepsilon$. In what follows we consider the integral
        \begin{equation}\label{claim:t1}
            \int_{1-2\varepsilon}^1v_s({\color{blue}t,r})(1-r^2)^{\frac{q-d}2}\,dr,
        \end{equation}
        which is crucial in proving the continuity of the potential.
 Since $|t-1|<\varepsilon$, $1-2\varepsilon <r <1$, and $t\neq r$, we have $\frac{1-2\varepsilon}{1+\varepsilon}\leq \alpha<1$. Hence, for $\varepsilon$ small enough, we can replace, up to constants, the hypergeometric function in \eqref{vBF} with its asymptotics given by \eqref{asy:F21}. Namely, 
        we can estimate \eqref{claim:t1} as follows:
        \[
            \int_{1-2\varepsilon}^1v_s(t,r)(1-r^2)^{\frac{q-d}2}\,dr \sim \int_{1-2\varepsilon}^1\frac{r^{d-1}}{h^s}(1-\alpha^2)^{d-s-1}(1-r^2)^{\frac{q-d}2}\,dr=:I.
        \]
     We distinguish two cases, depending whether $t$ is smaller or greater than $1$. 
     
     Let $t<1$. We split $I$ into $I=I_1+I_2$, where
        \begin{align*}
            I_1&=\int_{1-2\varepsilon}^t\frac{r^{d-1}}{h^s}(1-\alpha^2)^{d-s-1}(1-r^2)^{\frac{q-d}2}\,dr=\int_{1-2\varepsilon}^t \frac{r^{d-1}} {t^{2d-s-2}}  (t^2-r^2)^{d-s-1}(1-r^2)^{\frac{q-d}2}\,dr, \\ 
            I_2&=\int_t^1\frac{r^{d-1}}{h^s}(1-\alpha^2)^{d-s-1}(1-r^2)^{\frac{q-d}2}\,dr= \int_t^1  r^{s+1-d}(r^2-t^2)^{d-s-1}(1-r^2)^{\frac{q-d}2}\,dr.
        \end{align*}
       To estimate $I_2$ it is convenient to further split $I_2=I_3+I_4$, where 
        \begin{align*}
            I_3 & = \int_t^{\frac{1+t}2}  r^{s+1-d}(r^2-t^2)^{d-s-1}(1-r^2)^{\frac{q-d}2}\,dr, \\
            I_4 & = \int_{\frac{1+t}2}^1 r^{s+1-d}(r^2-t^2)^{d-s-1}(1-r^2)^{\frac{q-d}2}\,dr.
        \end{align*}
        For $q < d$ we have
        \begin{align*}
            |I_3|&\leq (2t)^{d-s-1}(1+t)^{\frac{q-d}2}\left(1-\frac{1+t}2\right)^{\frac{q-d}2}\int_t^{\frac{1+t}2} (r-t)^{d-s-1}\,dr\\
            &\leq C_1 \left(1-\frac{1+t}2\right)^{\frac{q-d}2}\left(\frac{1+t}2-t\right)^{d-s}=C_1\left(\frac{1-t}2\right)^{\frac{q+d}2-s}=C_2\left(1-t\right)^{\frac{q+d}2-s},
        \end{align*}
        where $C_1, C_2>0$, and we used that $t\leq r\leq\frac{1+t}2$ and $ s+1-d > 0 $. An analogous reasoning leads to the same estimate for $q\geq d$.
        Similarly, since $q> d-2$,
        \begin{align*}
        |I_4|&\leq C_3(1-t)^{d-s-1}\int_{\frac{1+t}2}^1 (1-r)^{^{\frac{q-d}2}}\,dr=C_3(1-t)^{d-s-1}\left(1-\frac{1+t}2\right)^{\frac{q-d+2}2}\\
        &=C_4(1-t)^{\frac{q+d}2-s},
        \end{align*}
        where $C_3, C_4>0$, and we used that $r-t\geq \frac{1-t}2$.
        As for the term $I_1$, setting $\tau=t-r$, we obtain
        \begin{align*}
        I_1 &= \int_0^{t-1+2\varepsilon} \frac{(t-\tau)^{d-1}}{t^{2d-s-2} }\tau^{d-s-1}(2t-\tau)^{d-s-1}(1-(t-\tau)^2)^{\frac{q-d}2}\,d\tau\\
        &\leq C_5\int_0^{t-1+2\varepsilon} \tau^{d-s-1}(1-t+\tau)^{\frac{q-d}2}\,d\tau \leq C_6\int_0^{t-1+2\varepsilon} f(t,\tau) \,d\tau,
        \end{align*}
        where $C_5, C_6>0$, and
        $$
        f(t,\tau)=
        \begin{cases}
        \smallskip
        \tau^{d-s-1} \quad & \text {if } \frac{q-d}2\geq 0,\\
        \displaystyle \tau^{\frac{q+d}2-s-1}  \quad & \text {if } -1<\frac{q-d}2< 0.
        \end{cases}
        $$
        Hence, we deduce the estimate
        $$
        |I_1|\leq C_7
        \begin{cases}
        \smallskip
        (t-1+2\varepsilon)^{d-s} \quad & \text {if } \frac{q-d}2\geq 0,\\
        \displaystyle (t-1+2\varepsilon)^{\frac{q+d}2-s}  \quad & \text {if } -1<\frac{q-d}2< 0
        \end{cases}
        $$
        for some $C_7>0$.
        In both cases, if $0<s<\min(d,\frac{q+d}2)$, we can find a positive $\beta$ such that
        $$|I_1|\leq C_7 (t-1+2\varepsilon)^{\beta}\leq C_7 (2\varepsilon)^{\beta}.$$

	 For $t>1$, instead, we write
        $$I=\int_{1-2\varepsilon}^1 \frac{r^{d-1}} {t^{2d-s-2}}  (t^2-r^2)^{d-s-1}(1-r^2)^{\frac{q-d}2}\,dr,$$
        and, assuming $\frac{q+d}2-s>0$, we obtain
       $$|I|\leq C_8\int_{1-2\varepsilon}^1 (1-r)^{\frac{q+d}2-s-1}\,dr=C_8(2\varepsilon)^{\frac{q+d}2-s},$$
       where $C_8>0$.
       
             We conclude that, if
       $d-1<s<\min(d,\frac{q+d}2)$, with $q>d-2$, there exist positive constants $\varepsilon_0\leq 1/4$, $C_9$, and $\beta$ such that
        for any $0<\varepsilon\leq \varepsilon_0$ and any $|1-t|<\varepsilon$, we have
       $$    0\leq  \int_{1-2\varepsilon}^1v_s(t,r)(1-r^2)^{\frac{q-d}2}\,dr\leq C_9\left(|1-t|^{\frac{q+d}2-s}+\varepsilon^{\beta}\right).$$
             Using this estimate it is easy to show that, under these assumptions, the potential is continuous also on $\partial B_1$.

        The case $s=d-1$ follows analogously, by using \eqref{limz2} instead of \eqref{limz3} in the estimate of  ${}_{2}F_{1}$ for $\alpha\sim 1$.          
    \end{proof}

    \begin{remark}\label{regular_oss}
        From Lemma~\ref{regular_lemm}, we infer that $\Ws \ast \mu_{q}^{E}\in C^0(\R^d)$ in particular when $q=s \in [d-2, d)\cap (0,d)$, and when $q=s+2$, with $s\in [d-4,d)\cap (0,d)$. 
        Such a continuity result will be relevant to 
        extend up to the boundary of $E$ the formulas \eqref{formula-qs} and \eqref{formula-qs2} of the following Theorem~\ref{teo:formula_convolution}.
    \end{remark}
    
    We are now in a position to prove the Fourier representation of the potential $\Ws \ast \mu^{E}_{q}$ inside $E$.

    \begin{teo}\label{teo:formula_convolution}
        Let $s \in (0, d)$ and $q\in  [d-2,+\infty)$. Let $\Ws$ be as in \eqref{eq:definition_W}, let $E = RDB_1$ be an ellipsoid, and let $\mu_{q}^{E}$ be the push-forward of the measure $\mu_{q}$ by the map $T^{E}$.
        Then, 
         \begin{equation}\label{inside}
            (\Ws \ast \mu^{E}_{q})(x) = \tilde c_{ d,s, q } \int _{\sph^{d-1}} \frac{ \Psi ( \omega  )  }{ | D R^{ T } \omega  | ^{ s }  }\, {}_{2}F_{1}\left( \frac{s-q}{2}, \frac{ s }{ 2 }; \frac{ 1 }{ 2 } ;\alpha ^{ 2 } ( x, \omega  ) \right) \, d\mathcal{H}^{d-1}(\omega)
        \end{equation}
        for every $x$ in the interior of $E$, 
        where $\Psi=\widehat{\Ws}|_{\sph^{d-1}}$,
        \begin{equation}\label{tildec}
            \tilde c_{d, s, q} = \frac{2^{s - d - 1}\Gamma(1+\frac{q}{2})\Gamma(\frac{s}{2})}{\pi^{d}\Gamma(1+\frac{q-s}{2})} 
        \end{equation}
        and
        \begin{equation}\nonumber
            \alpha(x, \omega) = \frac{x \cdot \omega}{|DR^{T}\omega|} .
        \end{equation}
        In particular, for $q = s$ the potential function $\Ws \ast \mu^{E}_{s}$ is constant in $E$ and is given by
        \begin{equation}\label{formula-qs}
            (\Ws \ast \mu^{E}_{s})(x)=\frac{2^{s-d-2}}{\pi^{d}}s\left(\Gamma\left(\frac{s}{2}\right)\right)^2 \int _{\sph^{d-1}} \frac{ \Psi ( \omega  )  }{ | D R^{ T } \omega  | ^{ s } }\, d\mathcal{H}^{d-1}(\omega)
        \end{equation} 
        for every $x\in E$.
                Finally, for $s\in [d-4,d)\cap (0,d) $ and  $q = s+2$, the potential function $\Ws \ast \mu^{E}_{s+2}$ is, up to an additive constant, a quadratic function in $E$ given by
        \begin{equation}\label{formula-qs2}
            \begin{aligned}
               & (\Ws \ast \mu^{E}_{s+2})(x)\\
               & =\tilde c_{ d,s, s+2 } \int _{ \sph^{d-1}} \frac{ \Psi ( \omega  )  }{ | D R^{ T } \omega  | ^{ s }  }\, d\mathcal{H}^{d-1}(\omega)-
              s\, \tilde c_{ d,s, s+2 }\int _{\sph^{d-1}} \frac{ \Psi ( \omega  )  }{ | D R^{ T } \omega  | ^{ s+2 }  } (x \cdot \omega)^2\, d\mathcal{H}^{d-1}(\omega)
            \end{aligned}
        \end{equation}
	for every $x\in E$.
    \end{teo}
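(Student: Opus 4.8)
The plan is to follow the Fourier strategy sketched in the introduction: for each $r>0$ compute the regularized potential $P_r=\tfrac{\chi_{B_r}}{|B_r|}\ast(\Ws\ast\mu^E_q)$ in Fourier form, and recover $\Ws\ast\mu^E_q$ by letting $r\to0^+$. Since $\mu^E_q$ has compact support, $\Ws\ast\mu^E_q\in\mathcal{S}'(\R^d)$ and $\widehat{\Ws\ast\mu^E_q}=\widehat{\Ws}\,\widehat{\mu^E_q}$, so $\widehat{P_r}=m_r\,\widehat{\Ws}\,\widehat{\mu^E_q}$, where $m_r$ is the (radial) Fourier transform of $\chi_{B_r}/|B_r|$. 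By Lemma~\ref{lemma:fourier_mus} applied with exponent $d$ one has $m_r(\xi)=O(|\xi|^{-(d+1)/2})$; combining this with $\widehat{\Ws}(\xi)=|\xi|^{s-d}\Psi(\xi/|\xi|)$ and the decay $O(|\xi|^{-(q+1)/2})$ of $\widehat{\mu^E_q}$ (from Lemma~\ref{lemma:fourier_mus} and \eqref{cor:fourier_muEs}) shows that $\widehat{P_r}\in L^1(\R^d)$ whenever $s<d$ and $q\geq d-2$. As $P_r$ is continuous (using the $L^1_{\loc}$ part of Lemma~\ref{regular_lemm}), Fourier inversion holds pointwise, and since $\widehat{P_r}$ is real and even we may write $P_r(x)=(2\pi)^{-d}\int_{\R^d}\widehat{P_r}(\xi)\cos(x\cdot\xi)\,d\xi$. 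Passing to polar coordinates $\xi=\rho\omega$, $\rho>0$, $\omega\in\sph^{d-1}$, and using $\widehat{\mu^E_q}(\rho\omega)=\hat c_q\,(\rho|DR^T\omega|)^{-q/2}J_{q/2}(\rho|DR^T\omega|)$, this reduces $P_r(x)$ to $\tfrac{\hat c_q}{(2\pi)^d}\int_{\sph^{d-1}}\Psi(\omega)|DR^T\omega|^{-q/2}\,R_r(x,\omega)\,d\mathcal{H}^{d-1}(\omega)$, where $R_r(x,\omega)=\int_0^\infty m_r(\rho)\,\rho^{\,s-1-q/2}\,J_{q/2}(\rho|DR^T\omega|)\cos(\rho\,x\cdot\omega)\,d\rho$.

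The core step is the closed-form evaluation of $R_r(x,\omega)$ for fixed $r>0$. Writing $m_r(\rho)=2^{d/2}\Gamma(1+\tfrac d2)\,J_{d/2}(r\rho)(r\rho)^{-d/2}$ and $\cos(\rho\,x\cdot\omega)=\sqrt{\tfrac\pi2\rho|x\cdot\omega|}\,J_{-1/2}(\rho|x\cdot\omega|)$ turns $R_r(x,\omega)$ into a Weber--Schafheitlin-type integral of a product of the three Bessel functions $J_{q/2}(\rho|DR^T\omega|)$, $J_{d/2}(r\rho)$, $J_{-1/2}(\rho|x\cdot\omega|)$ against a power of $\rho$; the hypotheses $s<d$ and $q\geq d-2$ are exactly what makes this integral absolutely convergent. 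For $x$ in the interior of $E$ one has $|x\cdot\omega|<|DR^T\omega|$ for every $\omega$ (the right-hand side is the support function of $E$), so when $r$ is small the parameter $|DR^T\omega|$ is the largest of the three, and the standard evaluation of such integrals expresses $R_r(x,\omega)$ through the Appell function $F_4$ with numerator parameters $\tfrac{s-q}{2}$, $\tfrac s2$, fourth parameter $\tfrac12$, and base-point variables a multiple of $r^2|DR^T\omega|^{-2}$ and $\alpha^2(x,\omega)=(x\cdot\omega)^2|DR^T\omega|^{-2}$ --- exactly the configuration for which \eqref{eq:limit_appel} is designed. Collecting all Gamma and power factors, and using $\hat c_q=2^{q/2}\Gamma(1+\tfrac q2)$, one obtains $P_r(x)=\tilde c_{d,s,q}\int_{\sph^{d-1}}\Psi(\omega)|DR^T\omega|^{-s}\,F_4\big(\cdots,\alpha^2(x,\omega)\big)\,d\mathcal{H}^{d-1}(\omega)$ with $\tilde c_{d,s,q}$ as defined in \eqref{tildec}.

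Letting $r\to0^+$: fix a compact set $K$ contained in the interior of $E$; then $\sup\{\alpha^2(x,\omega):x\in K,\ \omega\in\sph^{d-1}\}<1$ and $|DR^T\omega|$ is bounded away from $0$ and $\infty$, so for $r$ small the integrand over $\sph^{d-1}$ is dominated by a fixed $L^1(\sph^{d-1})$ function; dominated convergence together with \eqref{eq:limit_appel}, which collapses $F_4$ to ${}_2F_1(\tfrac{s-q}{2},\tfrac s2;\tfrac12;\alpha^2(x,\omega))$ as its first base-point variable tends to $0$, sends the right-hand side to the right-hand side of \eqref{inside}. On the other side $P_r(x)\to(\Ws\ast\mu^E_q)(x)$ at every continuity point of the potential, hence on the interior of $E$ by Lemma~\ref{regular_lemm}; since $K$ was arbitrary, \eqref{inside} follows throughout $\inter E$. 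The two corollaries are then immediate: for $q=s$, \eqref{2F10} gives ${}_2F_1(0,\tfrac s2;\tfrac12;\alpha^2)\equiv1$, the $x$-dependence disappears, and the constant \eqref{formula-qs} results after simplifying $\tilde c_{d,s,s}$ via $\Gamma(1+\tfrac s2)=\tfrac s2\Gamma(\tfrac s2)$; for $q=s+2$, \eqref{2F1-1} gives ${}_2F_1(-1,\tfrac s2;\tfrac12;\alpha^2)=1-s\,\alpha^2(x,\omega)$, and splitting the integral over $\sph^{d-1}$ into its constant part and its $(x\cdot\omega)^2$-part yields \eqref{formula-qs2}. In both cases these identities, valid a priori only in $\inter E$, extend to $\partial E$ because, under the stated restrictions on $s$, $\Ws\ast\mu^E_q\in C^0(\R^d)$ by Remark~\ref{regular_oss} and the right-hand sides are manifestly continuous.

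The main obstacle sits in the second step together with the passage to the limit: one has to pin down the exact closed form of the regularized triple-Bessel integral --- keeping track of every Gamma and power factor so as to land precisely on the constant \eqref{tildec} --- check that for $r$ small and $x\in\inter E$ its parameters lie in the convergence domain of $F_4$, and produce an $r$-uniform majorant on $\sph^{d-1}$ so that dominated convergence and \eqref{eq:limit_appel} apply. Restricting to the interior of $E$ is essential here: the condition $|x\cdot\omega|<|DR^T\omega|$, i.e.\ $\alpha^2(x,\omega)<1$ for all $\omega$, is exactly what makes both the hypergeometric series and the limiting procedure meaningful, while on and outside $\partial E$ the formula is recovered only in the special cases $q\in\{s,s+2\}$ through the separately established continuity of the potential in Lemma~\ref{regular_lemm} and Remark~\ref{regular_oss}.
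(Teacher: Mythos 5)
Your proposal follows essentially the same route as the paper's proof: regularize the potential by convolving with $\chi_{B_r}/|B_r|$, apply Fourier inversion to $P_r$ (whose transform is integrable precisely because $2s-q<d+2$ in the stated range), pass to polar coordinates, evaluate the resulting triple-Bessel integral via the Weber--Schafheitlin/Bailey formula in terms of the Appell function $F_4$ with variables $\beta^2(r,\omega)$ and $\alpha^2(x,\omega)$, send $r\to0^+$ using the limit $F_4\to{}_2F_1$ uniformly on $\sph^{d-1}$ (guaranteed by $|\alpha(x,\omega)|+\beta(r,\omega)\leq\delta<1$ for $x$ in the interior of $E$ and $r$ small), and then read off the cases $q=s$ and $q=s+2$ from \eqref{2F10}--\eqref{2F1-1}, extending to $\partial E$ by the continuity in Remark~\ref{regular_oss}. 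The only parts you leave implicit --- the exact Gamma-factor bookkeeping leading to \eqref{tildec} and the explicit verification of Bailey's validity conditions --- are precisely the computations the paper carries out, and your setup handles them correctly.
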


\begin{remark} Theorem~\ref{teo:formula_convolution} provides an alternative way to derive formula (2.31) in \cite{RieszFMMRSV} and in fact extends it to the entire range of $s\in (\max\{d-4,0\},d)$ in any space dimension $d$.
    \end{remark}

    \begin{proof}[Proof of Theorem~\ref{teo:formula_convolution}]
    Formula \eqref{inside} is a consequence of the inversion theorem for the Fourier transform.
    However, we cannot apply the inversion formula directly to $\Ws \ast \mu^{E}_{q}$, since its Fourier transform fails to be integrable at $\infty$ for $2s-q\geq1$. Indeed, by \eqref{hatW}, Lemma~\ref{lemma:fourier_mus}, and due to the asymptotic behavior  \eqref{eq:bessel_infinity} of the tail of the Bessel function at infinity, we have that $\widehat{\Ws\ast \mu _{ q }^{E}}=O(|\xi|^{s-d-\frac q2-\frac12})$ as $|\xi|\to+\infty$. We thus proceed by approximation.

        For $r>0$ let $B_{ r }$ be the ball of radius $r$ centered at the origin and let $\chi_{B_{r}}$ denote its characteristic function.
        We set $\chi_{r} = |B_{r}|^{-1} \chi_{B_{r}}$ and define $P_{ r } = \chi_{ r } \ast ( \Ws\ast \mu _{ q }^{E} )$.
        
        By Lemma~\ref{regular_lemm},
        $P_{r}$ belongs to $L^1_{\loc}(\mathbb{R}^d)$ and
        $P_{ r }$ converges pointwise in $\R^d\setminus \partial E$ (thus almost everywhere in $\R^d$) to $\Ws \ast \mu_{q}^{E}$, as $r\to0^+$.
        If $0<s<\min(d,\frac{q+d}2)$, 
            the function $P_{r}$ is continuous on $\R^d$, and $P_{ r }$ converges pointwise in $\R^d$ to $\Ws \ast \mu_{q}^{E}$, as $r\to0^+$.

        We note that $P_{r}$ is a tempered distribution and $\widehat P_{r} = \widehat \chi_{r} \, \widehat \Ws \, \widehat \mu_{q}^{E}$. To compute $\widehat \chi_{r}$ we follow the computation in \cite[Appendix B.5]{GRAFAKOS2008} and obtain
        \begin{equation}\label{eq:fourier_ball}
            \widehat \chi_{r}(\xi) = \frac{(2\pi)^{\frac{d}{2}}}{|B_{r}||\xi|^{\frac{d}{2}-1}} \int_{0}^{r} \rho^{\frac{d}{2}} J_{\frac{d}{2}-1}(\rho |\xi|) \, d\rho =
            \frac{2^{\frac{d}{2}}\Gamma(1+\frac{d}{2})}{r^{\frac{d}{2}}} \frac{1}{|\xi|^{\frac{d}{2}}} J_{\frac{d}{2}}(r|\xi|),
        \end{equation}
        where we used that $|B_{r}| = r^{d}\pi^{\frac{d}{2}} / \Gamma\big(1+\frac{d}{2}\big)$.
        
        By the homogeneity of $\widehat{\Ws}$, Lemma~\ref{lemma:fourier_mus}, and the asymptotic behavior of Bessel functions in \eqref{eq:bessel_zero}--\eqref{eq:bessel_infinity}, we deduce that $\widehat P_{r}(\xi)=O( |\xi|^{s-d})$
        as $|\xi| \to 0^+$ and $\widehat P_{r}(\xi)= O( |\xi|^{s-\frac{q}{2}-\frac{3}{2}d -1})$ as $|\xi| \to +\infty$. 
        Since $2s-q < d+2$ in our setting, this implies that
        $\widehat P_{r} \in L^{1}(\mathbb{R}^d)$.
        Thus, we can apply the inversion theorem for the Fourier transform to $P_{r}$.
        By Section~\ref{Fourier-Ws}, Lemma~\ref{lemma:fourier_mus}, \eqref{cor:fourier_muEs}, and \eqref{eq:fourier_ball}  we obtain
        \begin{equation}\label{eq:Pr_inverse_fourier}
            P_{ r } ( x ) = \frac{\tilde c_{q, d}}{r^{\frac{d}{2}}}\int _{ \r^d } \frac{ 1 }{ | \xi  | ^{ \frac{ 3 }{ 2 } d - s } | D R^{ T } \xi  | ^{ \frac{ q }{ 2 }  }  } J_{ \frac{ d }{ 2 }  } ( r| \xi  |  ) J_{ \frac{ q }{ 2 }  } ( | D R^{ T } \xi  |  ) \Psi \left( \frac{ \xi  }{ | \xi  |  }  \right) \cos(x \cdot \xi )  \, d\xi ,
        \end{equation}
        where
        \begin{equation}\nonumber
            \tilde c_{q, d} =  \frac{2^{\frac{q-d}{2}}}{\pi^{d}}\Gamma\left(1+\frac{q}{2}\right)\Gamma\left(1+\frac{d}{2}\right) .
        \end{equation}
        In \eqref{eq:Pr_inverse_fourier} the imaginary part can be dropped because $\widehat \chi_{r} \, \widehat \Ws \, \widehat \mu_{q}^{E}$ is even.
        Writing \eqref{eq:Pr_inverse_fourier} in polar coordinates yields
        \begin{equation}\nonumber
            P_{ r } ( x ) = \frac{\tilde c_{ q, d }  }{ r^{ \frac{ d }{ 2 }  }  } \int _{\sph^{d-1} } \frac{ \Psi( \omega  )  }{ | D R^{ T } \omega  | ^{ \frac{ q }{ 2 }  }  } \int _{ 0 } ^{ \infty  } \rho ^{ s - \frac{ q }{ 2 } -\frac{ d }{ 2 } -1 } J_{ \frac{ d }{ 2 }  } ( r\rho  ) J_{ \frac{ q }{ 2 }  } ( \rho | D R^{ T } \omega  |  ) \cos( \rho \omega \cdot x ) \, d\rho \, d\mathcal{H}^{d-1}(\omega) .
        \end{equation}
        Set
        \begin{equation}\label{notation-abt}
        t = | D R^{ T } \omega  | \rho,\quad \beta(r, \omega )  = \frac{ r }{ | D R^{ T } \omega  |  },\quad \text{and}\quad \alpha (x, \omega  )  = \frac{ x\cdot \omega }{ | D R^{ T } \omega  |  } .
        \end{equation}
        Changing variables in the integral with respect to $\rho$ we obtain
        \begin{equation}\nonumber
            P_{ r } ( x ) = \frac{ \tilde c_{ q, d }  }{ r^{ \frac{ d }{ 2 }  }  } \int _{\sph^{d-1}} \frac{ \Psi( \omega  )  }{ | D R^{ T } \omega  | ^{ s - \frac{ d }{ 2 }  }  } I_{ r }( x, \omega  ) \, d\mathcal{H}^{d-1}(\omega),\label{eq:expression_Pr}
        \end{equation}
        where
        \begin{equation}\nonumber
        I_{ r }( x, \omega  ):=\int _{ 0 } ^{ \infty  } t^{ s - \frac{ q }{ 2 } - \frac{ d }{ 2 } -1 } J_{ \frac{ d }{ 2 }  } ( t \beta(r,  \omega )   ) J_{ \frac{ q }{ 2 }  } ( t ) \cos(t \alpha (x, \omega  ) ) \, dt.
        \end{equation}
        To compute $I_r$ we recall that    
        \begin{equation}\nonumber
            J_{-\frac{1}{2}}(z) = \sqrt{\frac{2}{\pi z}} \cos(z)  \qquad \text{ for }z \neq 0 ,
        \end{equation}
        hence
        \begin{equation}\nonumber
        \cos( t\alpha (x, \omega  ) ) = \cos(t|\alpha(x, \omega)|) = \sqrt{ \frac{ \pi   }{ 2 }  }\, t^{ \frac{ 1 }{ 2 }  } |\alpha (x, \omega  )|^{\frac{1}{2}}  J_{ -\frac{ 1 }{ 2 }  } (t |\alpha (x, \omega  )| )  ,
        \end{equation}
        where the last expression on the right-hand side is extended at $t \alpha(x, \omega) = 0$ by continuity.
        Thus, we can rewrite $I_{ r } $ as   
        \begin{equation}\label{rewrite-Ir}
            I_{ r } ( x, \omega  ) = \sqrt{ \frac{ \pi  }{ 2 }  } |\alpha (x, \omega  )|^{\frac{1}{2}} \int _{ 0 } ^{ \infty  } t^{ s - \frac{ q }{ 2 }  - \frac{ d }{ 2 }  - \frac{ 1 }{ 2 }  } J_{ \frac{ d }{ 2 }  } ( t \beta(r, \omega)  ) J_{ \frac{ q }{ 2 }  } ( t ) J_{ -\frac{ 1 }{ 2 }  } (t |\alpha (x, \omega  )| ) \, dt.
        \end{equation}
        For the integral in \eqref{rewrite-Ir} we use the formula
        \begin{align*}
        \int _{ 0 } ^{ \infty  } t^{ \lambda -1 } J_{ \nu  } ( at)&J_{ \mu  } ( bt ) J_{ \rho  } ( ct ) \, dt \\
        & = \frac{ 2^{ \lambda -1 } a^{ \nu  } b^{ \mu  } \Gamma ( \frac{ 1 }{ 2 } ( \lambda +\mu +\nu +\rho  )  )  }{ c^{ \lambda +\mu +\nu  } \Gamma ( \mu +1 ) \Gamma ( \nu +1 ) \Gamma ( 1-\frac{ 1 }{ 2 } ( \lambda +\mu +\nu -\rho  )  )  }  \\
        & \hphantom{=} \times F_4\left(\frac{ 1 }{ 2 } ( \lambda +\mu +\nu -\rho  ) ,\frac{ 1 }{ 2 } ( \lambda +\mu +\nu +\rho  ) ;\mu  + 1, \nu +1; \frac{ b^{ 2 } }{ c^{ 2 }  } ,\frac{ a^{ 2 }  }{ c^{ 2 }  }   \right),
        \end{align*}
        which holds provided that
        \begin{equation}\label{eq:condition_appell}
            \Re ( \lambda +\mu +\nu +\rho  ) > 0 , \quad \Re( \lambda  ) < \frac{ 5 }{ 2 }  , \quad  c > | a | +| b |,
        \end{equation}
        see \cite[Formula 7.1]{BAILEY1936}. 
        For a fixed $x$ in the interior of $E$, $r>0$ and $\omega \in \sph^{d-1}$ we apply the formula with $\lambda = s - \frac{ q }{ 2 } -\frac{ d }{ 2 } +\frac{ 1 }{ 2 } $, $\nu = - \frac{ 1 }{ 2 } $, $\mu = \frac{ d }{ 2 } $,  $\rho = \frac{ q }{ 2 } $, $a = |\alpha (x, \omega  )|$, $b = \beta(r, \omega) $, and $c=1$. 
        Hence, conditions \eqref{eq:condition_appell} translates into $s > 0$, $2s-q < d+4$, and $|\alpha ( x, \omega  )| < 1 - \beta(r, \omega)$.
        The first two conditions are trivially satisfied for $s$ and $q$ in the range under consideration. As for the last one, we observe that $x = p RD \eta $ for some $\eta \in\sph^{d-1}$ and $0 \leq p < 1$, and thus
        \begin{equation}\nonumber
            | \alpha ( x, \omega  ) | = \frac{ | x \cdot  \omega| }{ | D R^{ T } \omega  |  } = \frac{p | \eta \cdot D R^{ T } \omega  |  }{ | D R^{ T } \omega  |  } \leq p < 1 
        \end{equation}
        for every $\omega\in \sph^{d-1}$.
        Since $\beta(r, \omega) \to 0$ as $r \to 0^+$, uniformly with respect to $\omega\in\sph^{d-1}$, there exist $\delta\in(0,1)$ and $r_0>0$ such that for $r<r_0$ we have
        $|\alpha ( x, \omega  )| +\beta(r, \omega)\leq \delta$ for every $\omega\in \sph^{d-1}$. For $x$ in the interior of $E$, $r<r_0$ and $\omega \in \sph^{d-1}$ we can then evaluate the integral in \eqref{rewrite-Ir}, and deduce that 
        \begin{align*}
            I_{ r } ( x, \omega  ) = &\sqrt{ \frac{ \pi  }{ 2 }  } |\alpha ( x,\omega  )|^{\frac{1}{2}} \frac{ 2^{s- \frac{ q }{ 2 } -\frac{ d }{ 2 } -\frac{ 1 }{ 2 }  } \beta^{ \frac{ d }{ 2 }  }(r,  \omega )  |\alpha ( x,\omega  )|^{-\frac{1}{2}} \Gamma \left( \frac{ s }{ 2 } \right)  }{ \Gamma \left( \frac{ d }{ 2 } +1 \right) \Gamma \left( \frac{ 1 }{ 2 }  \right) \Gamma(1+\frac{q-s}{2}) } \\
           & \times F_{ 4 } \left( \frac{s-q}{2},  \frac{ s }{ 2 } ;\frac{ d }{ 2 } +1, \frac{ 1 }{ 2 } ;\beta^{ 2 }(r,  \omega )  ,\alpha ^{ 2 } ( x, \omega  ) \right) \\
           & = \frac{2^{s-\frac{q}{2} - \frac{d}{2} - 1} \beta^{\frac{d}{2}}(r, \omega) \Gamma(\frac{s}{2})}{\Gamma(\frac{d}{2} + 1) \Gamma(1+\frac{q-s}{2})} F_{4}\left( \frac{s-q}{2} , \frac{ s }{ 2 } ;\frac{ d }{ 2 } +1, \frac{ 1 }{ 2 } ;\beta^{ 2 }(r,  \omega )  ,\alpha ^{ 2 } ( x, \omega  ) \right),
        \end{align*}
        where we have used  property (iii) of the Gamma function.
        By \eqref{eq:expression_Pr} and the definition of $\beta$ in \eqref{notation-abt} we conclude that, for $x$ in the interior of $E$ and $ r<r_0$
        \begin{equation}\label{eq:final_Pr}
            P_{r}(x) = \tilde c_{ d,s, q } \int _{ \sph^{d-1}} \frac{ \Psi( \omega  )  }{ | D R^{ T } \omega  | ^{ s }  }F_{4}\left( \frac{s-q}{2}, \frac{ s }{ 2 } ;\frac{ d }{ 2 } +1, \frac{ 1 }{ 2 } ;\beta^{ 2 }(r,  \omega )  ,\alpha ^{ 2 } ( x, \omega  ) \right) \, d\mathcal{H}^{d-1}(\omega)  ,
        \end{equation}
        where $\tilde c_{d, s, q}$ is the constant in \eqref{tildec}.
        
        We now pass to the limit as $r \to 0^+$.
        By \eqref{eq:limit_appel} and by the definition of $\beta$ we have
        \begin{equation}\label{eq:convergence_appel_integrand}
            \lim_{r \to 0} F_{4}\left(\frac{s-q}{2}, \frac{s}{2}; \frac{d}{2} + 1,\frac{1}{2} ; \beta^{2}(r, \omega), \alpha^{2}(x, \omega)\right) ={}_{2} F_{1}\left(\frac{s-q}{2}, \frac{s}{2};\frac{1}{2}; \alpha^{2}(x, \omega)\right) .
        \end{equation}
        Moreover, since $F_{4}$ is analytic in its domain of definition and $|\alpha(x, \omega)| + \beta(r, \omega) \leq \delta < 1$ for $r<r_0$, the convergence \eqref{eq:convergence_appel_integrand} is uniform with respect to 
        $\omega\in\sph^{d-1}$. 
        Hence, passing to the limit in \eqref{eq:final_Pr} and recalling that $P_{r}$ converges to $\Ws \ast \mu_{q}^{E}$ pointwise in the interior of $E$, as $r\to0^+$, we obtain \eqref{inside} for every $x$ in the interior of $E$.

        The statement \eqref{formula-qs} for $q=s$ follows immediately in the interior of $E$ from \eqref{inside}, owing to \eqref{2F10} and property (i) of the Gamma function. Similarly, the statement \eqref{formula-qs2} for $q=s+2$ follows immediately in the interior of $E$ from \eqref{inside}, owing to \eqref{2F1-1}. In fact both formulas hold up to the boundary of $E$ since in both cases the potential $\Ws \ast \mu_{q}^{E}$ is continuous, as observed in Remark~\ref{regular_oss}.
    \end{proof}

    We conclude this section by proving Theorem~\ref{thm:main}.

    \begin{proof}[Proof of Theorem~\ref{thm:main}]
    By Theorem~\ref{thm:exun} the minimizer of $I_{s}^{E}$ exists, is unique, and is the unique measure satisfying the Euler--Lagrange
    conditions~\eqref{eq:third_el_condition}--\eqref{eq:second_el_condition}.
    Therefore, to conclude it is enough to show that $\mu_{s}^{E}$ satisfies
    \eqref{eq:third_el_condition}--\eqref{eq:second_el_condition}.
        Condition \eqref{eq:third_el_condition} is trivially satisfied. Conditions~\eqref{eq:first_el_condition}--\eqref{eq:second_el_condition} 
        follow from Theorem~\ref{teo:formula_convolution} with $q=s$.
    \end{proof}

    \section{Deviation from the radially symmetric case for $s < d-2$}\label{Sec:counter}
    We recall that in Section \ref{sect:superC} we have shown that, for $s\geq d-2$, the minimizer $\mu_s^E$ of $I_s^{E}$ is insensitive to the anisotropy $\Phi$, and in particular if $E=B_1$ we have $\mu_s^{B_1}=\mu_{\iso,s}$. 

    In this section we show that the equality above may fail for $s < d-2$ by providing an explicit example of a kernel $\Ws$  of the form \eqref{eq:definition_W} with continuous and non-negative Fourier transform, for which $\mu_{\iso, d-2}$ is not the minimizer of $I_{s}^{B_{1}}$.

    \begin{lemma}\label{lemma:admissibility_anisotropy}
        Let $ d \geq 3 $ and $s\in(0 ,d-2)$.
        Let $\Ws$ be as in \eqref{eq:definition_W} with $\Phi:{\mathbb S}^{d-1}\to\R$ given by
        \begin{equation}\nonumber
            \Phi(x) = \sum_{i=1}^{d} \alpha _{i}x_{i}^{2} \quad \text{ for }x \in \sph^{d-1},
        \end{equation}
        with $\alpha _{i} > 0$ for $i = 1, \dots, d$.
        Then $\widehat{W_s}$ is non-negative on $\sph^{d-1}$ if and only if
        \begin{equation}\label{cex>0}
            \alpha _{i} \leq \frac{1}{d-s-1} \sum_{j \neq i} \alpha _{j}  \qquad \text{ for every } i = 1, \dots, d .
        \end{equation}
    \end{lemma}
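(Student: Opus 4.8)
The plan is to compute $\widehat{\Ws}$ explicitly on $\sph^{d-1}$ and reduce the non-negativity statement to an elementary optimization of a diagonal quadratic form. Writing $\Phi(x/|x|) = |x|^{-2}\sum_i \alpha_i x_i^2$, we have $\Ws(x) = \sum_{i=1}^d \alpha_i\, x_i^2|x|^{-s-2}$, so the first step is to split each summand into its radial and degree-two harmonic parts: setting $p_i(x) = x_i^2 - |x|^2/d$, which is a homogeneous harmonic polynomial of degree $2$, one has $x_i^2|x|^{-s-2} = p_i(x)|x|^{-s-2} + d^{-1}|x|^{-s}$. Since $\Phi$ is a polynomial, the spherical harmonic expansion of Section~\ref{Fourier-Ws} is finite here ($\Phi = \phi_0 + \phi_2$ with $\phi_0 \equiv d^{-1}\sum_i\alpha_i$), so the convergence hypothesis is vacuous and $\Psi = \widehat{\Ws}|_{\sph^{d-1}}$ is automatically continuous.

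The second step is to apply the Fourier transform formula \eqref{Stein-F} to each piece: with $k=2$ for the term $p_i(x)|x|^{-s-2}$ (noting $(-i)^2 = -1$) and with $k=0$ for $d^{-1}|x|^{-s}$. This gives, for $\xi \in \sph^{d-1}$,
\[
\widehat{\Ws}(\xi) = 2^{d-s}\pi^{d/2}\sum_{i=1}^d \alpha_i\left[-\frac{\Gamma(\frac{d-s}{2}+1)}{\Gamma(\frac{s}{2}+1)}\Big(\xi_i^2 - \tfrac1d\Big) + \frac1d\,\frac{\Gamma(\frac{d-s}{2})}{\Gamma(\frac{s}{2})}\right].
\]
Using property~(i) of the Gamma function to write $\Gamma(\frac{d-s}{2}+1)/\Gamma(\frac{s}{2}+1) = \tfrac{d-s}{s}\,\Gamma(\frac{d-s}{2})/\Gamma(\frac{s}{2})$ and simplifying, everything collapses to
\[
\widehat{\Ws}(\xi) = \frac{2^{d-s}\pi^{d/2}}{s}\,\frac{\Gamma(\frac{d-s}{2})}{\Gamma(\frac{s}{2})}\left(\sum_{i=1}^d\alpha_i - (d-s)\sum_{i=1}^d\alpha_i\xi_i^2\right),\qquad \xi\in\sph^{d-1}.
\]

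The third step is elementary. The prefactor is strictly positive, and since $s < d-2$ we have $d-s>0$; hence $\widehat{\Ws}\geq 0$ on $\sph^{d-1}$ if and only if $\sum_i\alpha_i\xi_i^2 \leq (d-s)^{-1}\sum_i\alpha_i$ for every $\xi\in\sph^{d-1}$. The form $\xi\mapsto\sum_i\alpha_i\xi_i^2$ attains its maximum over $\sph^{d-1}$ at a coordinate vector, with value $\max_i\alpha_i$, so the condition is equivalent to $\alpha_i\leq (d-s)^{-1}\sum_j\alpha_j$ for every $i$. Finally, since $s<d-2$ gives $d-s-1>0$, isolating $\alpha_i$ turns this into $(d-s-1)\alpha_i\leq\sum_{j\neq i}\alpha_j$, which is exactly \eqref{cex>0}.

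There is no real obstacle: once the harmonic/radial decomposition of $x_i^2|x|^{-s-2}$ is spotted, the computation is routine, and the remainder is a one-line optimization of a diagonal quadratic form together with a trivial rearrangement. The only points requiring care are the bookkeeping of the constants and the sign $(-i)^2=-1$ in \eqref{Stein-F}, and recording that $d-s-1>0$ (guaranteed by $s<d-2$) is precisely what makes the passage from the ``$\max$'' formulation to \eqref{cex>0} an equivalence.
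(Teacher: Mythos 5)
Your proposal is correct and follows essentially the same route as the paper: split $\Phi$ into its radial (degree-$0$) and degree-$2$ harmonic parts, apply the Fourier formula \eqref{Stein-F} with $k=0$ and $k=2$, simplify via property (i) of the Gamma function, and read off non-negativity of the resulting diagonal quadratic form on $\sph^{d-1}$. The only difference is cosmetic: you use the harmonic polynomials $x_i^2-|x|^2/d$ while the paper uses their $d$-multiples $(d-1)x_i^2-\sum_{j\neq i}x_j^2$, and your closing step phrases the coefficientwise condition as a maximization of $\sum_i\alpha_i\xi_i^2$ over the sphere, which is equivalent.
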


    \begin{proof}
    Let $p_{0}$ and $p_{2,i}$, for $i = 1, \dots, d$, be the homogeneous harmonic polynomials defined as 
    \begin{equation}\nonumber
      p_{0} \equiv1, \quad p_{2,i}(x) = (d-1)x_{i}^{2} - \sum_{j \neq i} x_{j}^{2},
        \end{equation}
        for $x \in\R^d$. Then for $\omega\in \sph^{d-1}$ we write $\Phi $ as 
        \begin{equation}\nonumber
            \Phi(\omega)= \frac{1}{d} \sum_{i=1}^{d}\alpha _{i}p_{0} + \frac{1}{d} \sum_{i=1}^{d}\alpha _{i}p_{2,i},
        \end{equation}
    and hence, for $x\in \R^d$, $x\neq 0$, the kernel $\Ws$ can be rewritten as 
    \begin{equation}\nonumber
        \Ws(x) = \frac{1}{d} \sum_{i=1}^{d} \alpha _{i} \frac{p_{0}}{|x|^{s}} + \frac{1}{d} \sum_{i=1}^{d} \alpha _{i} \frac{p_{2,i}}{|x|^{s+2}}.
    \end{equation}
    From \eqref{Stein-F} we deduce that for $\xi\in \R^d$, $\xi\neq 0$,
        \begin{equation}\label{eq:fourier_quadratic_anis}
            \begin{aligned}
                \widehat \Ws(\xi) & = \frac{1}{d}  2^{d-s}\pi^{\frac{d}{2}} \frac{\Gamma(\frac{d-s}{2})}{\Gamma (\frac{s}{2})} \frac{1}{|\xi|^{d-s}}\sum_{i=1}^{d} \alpha _{i} - \frac{1}{d} \sum_{i=1}^{d} \alpha _{i} 2^{d-s}\pi^{\frac{d}{2}} \frac{\Gamma (1+\frac{d-s}{2})}{\Gamma (1+\frac{s}{2})} \frac{p_{2,i}(\xi)}{|\xi|^{2+d-s}}\\
                                & = \frac{1}{d} 2^{d-s}\pi^{\frac{d}{2}}\frac{\Gamma (\frac{d-s}{2})}{\Gamma (\frac{s}{2})}\frac{1}{|\xi |^{d-s}} \sum_{i=1}^{d}\left(\alpha _{i} - \alpha _{i}\frac{d-s}{s}p_{2,i}\left(\frac{\xi }{|\xi |}\right)\right),
            \end{aligned}
        \end{equation}
        where we have used property (i) of the Gamma function.
    In particular, for $\omega\in\sph^{d-1}$ we have 
        \begin{align*}
            \frac{1}{d}\sum_{i=1}^{d}\left(\alpha _{i} - \alpha _{i}\frac{d-s}{s}p_{2,i}(\omega) \right) & = \frac{1}{d}\sum_{i=1}^{d}\left(\alpha _{i} - \alpha _{i}\frac{d-s}{s}(d\omega_i^2-1) \right) 
            \\
            & = \frac{1}{s}\sum_{i=1}^{d}\alpha_i \sum_{j=1}^d \omega_j^2-\frac{1}{s}\sum_{i=1}^{d}\alpha_i (d-s)\omega_i^2\
            \\
            & =  \frac{1}{s}\sum_{i=1}^{d}\left((1-d+s)\alpha _{i} + \sum_{j \neq i} \alpha _{j}\right)\omega _{i}^{2} .
        \end{align*}
        Thus, \eqref{eq:fourier_quadratic_anis} can be rewritten as 
        \begin{equation}\nonumber
            \widehat \Ws(\xi ) = \frac{1}{|\xi |^{d-s}}2^{d-s}\pi^{\frac{d}{2}}\frac{\Gamma (\frac{d-s}{2})}{\Gamma (\frac{s}{2})}\frac{1}{s} \sum_{i=1}^{d} \left((1-d+s)\alpha _{i} + \sum_{j \neq i} \alpha _{j}\right) \frac{\xi _{i}^{2}}{|\xi|^2} .
        \end{equation}
        Since $1-d+s < 0$ for $s < d-2$, we conclude that $\widehat \Ws \geq 0$ if and only if \eqref{cex>0} is satisfied.
    \end{proof}

    We are now in a position to prove the main result of this section.

    \begin{teo}
        Let $d\geq3$ and $s\in(0,d-2)$. Let $\Ws$ be as in \eqref{eq:definition_W} with $\Phi:{\mathbb S}^{d-1}\to\R$ given by
        \begin{equation}\nonumber
            \Phi(x) = \frac{d-1}{d-s-1} x_{1}^{2} + \sum_{i=2}^{d} x_{i}^{2} = 1 + \frac{s}{d-s-1}x_{1}^{2} \quad \text{ for } x\in \sph^{d-1}.
        \end{equation}
        Then $\Phi$ is continuous, even, and strictly positive, and $\widehat{\Ws}$
        is continuous and non-negative on $\sph^{d-1}$. However, the measure $\mu_{\iso, d-2}$  is not the minimizer of the energy $I_{s}^{B_{1}}$ with kernel~$\Ws$. 
    \end{teo}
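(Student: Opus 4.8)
The plan is to argue by contradiction, showing that $\mu_{\iso,d-2}$ fails the Euler--Lagrange condition \eqref{eq:first_el_condition}. That $\Phi=1+\tfrac{s}{d-s-1}x_1^2$ is continuous, even and $\ge1>0$ is clear, and the non-negativity of $\widehat{\Ws}$ on $\sph^{d-1}$ follows from Lemma~\ref{lemma:admissibility_anisotropy}: the admissibility condition \eqref{cex>0} holds here with equality for $i=1$ and strictly for $i\ge2$. Suppose now that $\mu_{\iso,d-2}$ is the minimizer of $I_s^{B_1}$. The hypotheses of Theorem~\ref{thm:exun} are met, so $\mu_{\iso,d-2}$ would satisfy \eqref{eq:third_el_condition}--\eqref{eq:second_el_condition}; since $\mu_{\iso,d-2}=c_{d-2,d}\,\mathcal H^{d-1}\resmes\partial B_1$ has constant positive density on $\partial B_1$, condition \eqref{eq:first_el_condition} would force $\Ws\ast\mu_{\iso,d-2}$ to coincide $\mathcal H^{d-1}$-a.e.\ on $\partial B_1$ with a constant. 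Because $0<s<d-2<d-1=\min\big(d,\tfrac{(d-2)+d}{2}\big)$, Lemma~\ref{regular_lemm} gives $\Ws\ast\mu_{\iso,d-2}\in C^0(\R^d)$, so this potential would in fact be constant on all of $\partial B_1$. It therefore suffices to exhibit two points of $\partial B_1$ at which it takes different values.

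First I would identify $\Psi=\widehat{\Ws}|_{\sph^{d-1}}$: substituting $\alpha_1=\tfrac{d-1}{d-s-1}$ and $\alpha_2=\dots=\alpha_d=1$ into the expression for $\widehat{\Ws}$ obtained in the proof of Lemma~\ref{lemma:admissibility_anisotropy}, the coefficient of $\xi_1^2/|\xi|^2$ vanishes while the remaining coefficients coincide, so that $\Psi(\omega)=A\,(1-\omega_1^2)$ for an explicit constant $A>0$. Applying formula \eqref{inside} of Theorem~\ref{teo:formula_convolution} with $q=d-2$ and $E=B_1$ (so that $R=D=\Id$ and $|DR^{T}\omega|\equiv1$, whence the last argument of the hypergeometric in \eqref{inside} becomes $(x\cdot\omega)^2$) yields, for $x$ in the interior of $B_1$ and with $\tilde c_{d,s,d-2}>0$,
\[
(\Ws\ast\mu_{\iso,d-2})(x)=\tilde c_{d,s,d-2}\,A\int_{\sph^{d-1}}(1-\omega_1^2)\,{}_{2}F_{1}\!\left(\tfrac{s-d+2}{2},\tfrac{s}{2};\tfrac12;(x\cdot\omega)^2\right)d\mathcal H^{d-1}(\omega).
\]
Evaluating this along the radii $x=te_1$ and $x=te_2$ with $0<t<1$ and integrating out the angular variables (using the invariance under permutations and sign changes of the last $d-1$ coordinates), both integrals reduce to one-dimensional integrals over $u\in(-1,1)$ with weight $(1-u^2)^{(d-3)/2}$, and a direct computation gives
\[
(\Ws\ast\mu_{\iso,d-2})(te_2)-(\Ws\ast\mu_{\iso,d-2})(te_1)=\frac{\tilde c_{d,s,d-2}\,A\,\mathcal H^{d-2}(\sph^{d-2})}{d-1}\,J(t),
\]
where $J(t)=\int_{-1}^{1}(d\,u^2-1)\,{}_{2}F_{1}\!\left(\tfrac{s-d+2}{2},\tfrac{s}{2};\tfrac12;t^2u^2\right)(1-u^2)^{\frac{d-3}{2}}\,du$.

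Letting $t\to1^{-}$ is legitimate: the potential is continuous by Lemma~\ref{regular_lemm}, and since $s<d-1$ the boundary behaviour of ${}_{2}F_{1}$ recorded in \eqref{limz2}--\eqref{limz3} is integrable against $(1-u^2)^{\frac{d-3}{2}}$, so $J(t)\to J(1)$ by dominated convergence, where $J(1)$ is obtained by replacing $t^2u^2$ with $u^2$. The problem thus reduces to showing $J(1)\neq0$. Expanding ${}_{2}F_{1}$ and integrating termwise via the Beta integral \eqref{Beta}, the Gamma factors simplify and one is left with $J(1)=K\sum_{n\ge1}\tfrac{(\alpha)_n(\beta)_n}{(n-1)!\,(\tfrac d2+1)_n}$, with $\alpha=\tfrac{s-d+2}{2}$, $\beta=\tfrac s2$ and $K>0$ (the series converges absolutely since its general term is $O(n^{s-d})$). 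Shifting the summation index by one and extracting the $n=1$ factor identifies this with $J(1)=\tfrac{K\alpha\beta}{\tfrac d2+1}\,{}_{2}F_{1}\!\left(\alpha+1,\beta+1;\tfrac d2+2;1\right)$; since $\tfrac d2+2-(\alpha+1)-(\beta+1)=d-s-1>0$, Gauss's formula \eqref{limz1} evaluates the hypergeometric factor to $\Gamma(\tfrac d2+2)\Gamma(d-s-1)\big/\big(\Gamma(d-\tfrac s2)\Gamma(\tfrac{d-s+2}{2})\big)$, a quotient of Gamma functions of strictly positive arguments, hence positive. As $\beta>0$ and, because $s<d-2$, $\alpha<0$, we conclude $J(1)<0$; in particular $(\Ws\ast\mu_{\iso,d-2})(e_1)\neq(\Ws\ast\mu_{\iso,d-2})(e_2)$, contradicting the constancy of the potential on $\partial B_1$. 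Hence $\mu_{\iso,d-2}$ is not the minimizer of $I_s^{B_1}$.

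The only delicate point is the sign of $J(1)$. For $\alpha\in(-1,0)$ (that is, $d-4<s<d-2$) every $(\alpha)_n$ with $n\ge1$ is negative and the sign is immediate, but for $\alpha\le-1$ the Pochhammer symbols $(\alpha)_n$ change sign and no term-by-term estimate is available; it is precisely the collapse of $J(1)$ to a single Gauss hypergeometric value at $z=1$ --- ultimately a ratio of Gamma functions with manifestly positive arguments --- that delivers the conclusion uniformly for all $s\in(0,d-2)$.
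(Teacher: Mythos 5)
Your proof is correct, and while it shares the paper's overall frame --- non-negativity of $\widehat\Ws$ via Lemma~\ref{lemma:admissibility_anisotropy} (equality in \eqref{cex>0} for $i=1$, strict for $i\ge2$), continuity of the potential via Lemma~\ref{regular_lemm}, and the contradiction with \eqref{eq:first_el_condition} obtained by showing $(\Ws\ast\mu_{\iso,d-2})(e_1)\neq(\Ws\ast\mu_{\iso,d-2})(e_2)$ --- the central computation follows a genuinely different route. The paper stays entirely in physical space: it writes the two potential values as explicit surface integrals of the kernel against $\mathcal H^{d-1}\resmes\partial B_1$, uses $|e_1-\omega|^2=2(1-\omega_1)$ and a coordinate swap to reduce to the single integral \eqref{eq:integral}, and evaluates it in spherical coordinates via \eqref{Beta} and the Legendre duplication formula, ending with the nonvanishing factor $(2-d+s)$. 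You instead work on the Fourier side: you compute $\Psi(\omega)=A(1-\omega_1^2)$ with $A>0$, invoke Theorem~\ref{teo:formula_convolution} with $q=d-2$ and $E=B_1$ (legitimate, since that theorem is proved for all $s\in(0,d)$ and $q\ge d-2$, and $\Psi$ here is explicitly continuous), reduce by the symmetry in the last $d-1$ coordinates to the one-dimensional integral $J(t)$, pass to the boundary by continuity plus dominated convergence, and evaluate $J(1)$ by term-by-term integration and Gauss's theorem \eqref{limz1}, obtaining not just $J(1)\neq 0$ but its sign; this reuses the paper's representation formula and would apply verbatim to any anisotropy with explicitly known $\Psi$, at the price of the extra limiting and series manipulations, whereas the paper's computation is more elementary and self-contained, needing only classical Beta/Gamma identities. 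One small imprecision: in the dominated-convergence step you write that integrability holds ``since $s<d-1$'', but the relevant threshold for the asymptotics \eqref{limz2}--\eqref{limz3} is $s=\frac{d-1}{2}$, because here $\gamma-\alpha-\beta=\frac{d-1-2s}{2}$; the step is nonetheless sound for every $s\in(0,d-2)$, since $1-t^2u^2\ge 1-u^2$ yields a $t$-uniform majorant whose total exponent near $u=\pm1$ is $\frac{d-1-2s}{2}+\frac{d-3}{2}=d-2-s>0$.
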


    \begin{proof}
    All the properties of $\Phi$ and $\Ws$ are straightforward, except for the sign of $\widehat\Ws$.
     By Lemma~\ref{lemma:admissibility_anisotropy} the non-negativity of $\widehat\Ws$ on $\sph^{d-1}$ is equivalent to
        \begin{equation}\label{eq:admissibility}
            1 \leq \frac{1}{d-s-1} \left(d-2 + \frac{d-1}{d-s-1}\right) .
        \end{equation}
        A simple computation shows that \eqref{eq:admissibility} reduces to $s^{2} -sd \leq 0$, which is true for $0< s < d-2$.

       To prove that  the measure $\mu_{\iso, d-2}$  is not the minimizer of $I_{s}^{B_{1}}$, it is enough to show that
        \begin{equation}\label{eq:failure_EL}
            A := (\Ws \ast \mu_{\iso, d-2} )(e_{2}) - (\Ws \ast  \mu_{\iso, d-2} )(e_{1}) \neq 0.
        \end{equation}
        Indeed, since $ \Ws \ast \mu_{\iso, d-2} \in C^{0}(\r^d)$ by Lemma~\ref{regular_lemm},  equation \eqref{eq:failure_EL} implies that $ \Ws \ast\mu_{\iso, d-2}$ is not constant 
        $\mathcal{H}^{d-1}$-a.e.\ on $\partial B_1$, contradicting \eqref{eq:first_el_condition}.
        
        We note that
        $$
        (\Ws \ast \mu_{\iso, d-2} )(e_{1}) = c_{d-2,d} \int_{\sph^{d-1}}  \frac{1}{|e_{1}-\omega|^{s}}\left(1+\frac{s}{d-s-1}\frac{(1-\omega_{1})^{2}}{|e_{1}-\omega|^{2}}\right) \, d\mathcal{H}^{d-1}(\omega)
        $$
        and by the change of variables $\tilde \omega:=(\omega_2,\omega_1,\omega_3,\dots,\omega_d)$ we can rewrite
        $$
         \begin{aligned}
                (\Ws \ast \mu_{\iso, d-2} )(e_{2}) & = c_{d-2,d} \int_{\sph^{d-1}}  \frac{1}{|e_{2}-\omega|^{s}}\left(1+\frac{s}{d-s-1}\frac{\omega_{1}^{2}}{|e_{2}-\omega|^{2}}\right) \, d\mathcal{H}^{d-1}(\omega) \\
                & =  c_{d-2,d}  \int_{\sph^{d-1}}  \frac{1}{|e_{1}-\tilde\omega|^{s}}\left(1+\frac{s}{d-s-1}\frac{\tilde\omega_{2}^{2}}{|e_{1}-\tilde\omega|^{2}}\right) \, d\mathcal{H}^{d-1}(\tilde\omega).
            \end{aligned}
        $$
        Therefore,
         $$
              A =c_{d-2,d}\frac{s}{d-s-1} \int_{\sph^{d-1}} \frac{\omega_{2}^{2} - (1-\omega_{1})^{2}}{|e_{1} - \omega|^{s+2}} \, d\mathcal{H}^{d-1}(\omega).
         $$
             Since $|e_{1}-\omega|^{2} = 2 - 2\omega_{1}$ for $\omega\in\sph^{d-1}$,
    we have    
    \begin{equation}\nonumber
            |e_{1}-\omega|^{s+2} = 2^{\frac{s+2}{2}} (1-\omega_{1})^{\frac{s+2}{2}} \quad \text{ for } \omega\in\sph^{d-1}.
        \end{equation}
        Hence, the claim \eqref{eq:failure_EL} reduces to showing that
        \begin{equation}\label{eq:integral}
            \int_{\sph^{d-1}} \frac{\omega_{2}^{2} - (1-\omega_{1})^{2}}{(1-\omega_{1})^{\frac{s+2}{2}}}\, d\mathcal{H}^{d-1}(\omega)\neq 0.
        \end{equation}
         
         We consider separately the two cases $d \geq 4$ and $d = 3$.
        We start with the case $ d \geq 4 $. 
        Passing to spherical coordinates, the integral in \eqref{eq:integral} can be rewritten as $ C(d)(I_{1} - I_{2}) $, where $C(d)>0$ is a dimensional constant, and
        \begin{equation}\nonumber 
                \begin{aligned}
                    I_{1} & = \int_{0}^{\pi } \int_{0}^{\pi } \frac{\sin^{d}(\varphi_{1})\cos^{2}(\varphi_{2})\sin^{d-3}(\varphi_{2})}{(1-\cos(\varphi_{1}))^{\frac{s+2}{2}}} \, d \varphi_{1}d\varphi_{2} ,\\
                I_{2} & =  \int_{0}^{\pi } \int_{0}^{\pi } (1- \cos(\varphi_{1}))^{\frac{2-s}{2}}\sin^{d-2}(\varphi_{1})\sin^{d-3}(\varphi_{2}) \, d\varphi_{1} d\varphi_{2}.
            \end{aligned}
        \end{equation}
          To conclude we need to show that $I_{1} - I_{2} \neq 0$.  We write $I_{1}=I_3I_4$, where
         \begin{equation}\nonumber 
                \begin{aligned}
                    I_{3} & = \int_{0}^{\pi } \sin^{d}(\varphi_{1}) (1-\cos(\varphi_{1}))^{\frac{-2-s}{2}} \, d\varphi_{1} ,\\
                I_{4} & = \int_{0}^{\pi } \cos^{2}(\varphi_{2}) \sin^{d-3}(\varphi_{2}) \, d\varphi_{2} .
            \end{aligned}
        \end{equation}
    By the sine and the cosine duplication formula and by \eqref{Beta} we obtain
        \begin{equation}\label{eq:I3}
            \begin{aligned}
                I_{3} & =  2^{d - \frac{s+2}{2}} \int_{0}^{\pi } \sin^{d-2-s}\left(\frac{\varphi_{1}}{2}\right) \cos^{d} \left(\frac{\varphi_{1}}{2}\right)\, d\varphi_{1} \\
                      & = 2^{d +1 - \frac{s+2}{2}} \int_{0}^{\frac{\pi }{2} } \sin^{d-2-s}(\varphi_{1}) \cos^{d} (\varphi_{1})\, d\varphi_{1} = 2^{d-1-\frac{s}{2}} B\left(\frac{d-s-1}{2}, \frac{d+1}{2}\right),
            \end{aligned}
        \end{equation}
        whereas
        \begin{equation}\label{eq:I4}
                I_{4} = 2\int_{0}^{\frac{\pi }{2}} \cos^{2}(\varphi_{2}) \sin^{d-3}(\varphi_{2}) \, d\varphi_{2} = B\left(\frac{d-2}{2}, \frac{3}{2}\right).
               \end{equation}
               Similarly, we write $I_2=I_5I_6$, where
        \begin{equation}\label{eq:I5}
            \begin{aligned}
                I_{5} & = \int_{0}^{\pi }(1 - \cos(\varphi_{1}))^{\frac{2-s}{2}}\sin^{d-2}(\varphi_{1}) \, d\varphi_{1} \\
                      & = 2^{d-\frac{s}{2}} \int_{0}^{\frac{\pi }{2}} \sin^{d-s}(\varphi_{1})\cos^{d-2}(\varphi_{1}) \, d\varphi_{1} = 2^{d-1-\frac{s}{2}} B\left(\frac{d-s+1}{2}, \frac{d-1}{2}\right) 
            \end{aligned}
        \end{equation}
       and
        \begin{equation}\label{eq:I6}
            \begin{aligned}
                I_{6}  & = \int_{0}^{\pi } \sin^{d-3}(\varphi_{2}) \, d\varphi_{2} = 2^{d-2} \int_{0}^{\frac\pi2 } \sin^{d-3}(\varphi_{2}) \cos^{d-3}(\varphi_{2}) \, d\varphi_{2} \\
                &= 2^{d-3} B\left(\frac{d-2}{2}, \frac{d-2}{2}\right) .
            \end{aligned}
        \end{equation}
         Combining \eqref{eq:I3}--\eqref{eq:I6} yields
        \begin{equation}\nonumber 
            \begin{aligned}
               I_1-I_2= I_{3}I_{4} - I_{5}I_{6} & = 2^{d-1-\frac{s}{2}}B\left(\frac{d-s-1}{2}, \frac{d+1}{2}\right) B\left(\frac{d-2}{2}, \frac{3}{2}\right) \\
                                        & \hphantom{=} \,\,- 2^{2d-4-\frac{s}{2}}B\left(\frac{d-s+1}{2}, \frac{d-1}{2}\right)B\left(\frac{d-2}{2}, \frac{d-2}{2}\right) .
            \end{aligned}
        \end{equation}
        By \eqref{Beta0} the previous equality can be rewritten as
        \begin{equation}\nonumber 
            \begin{aligned}
                I_1-I_2= 2^{d-1-\frac{s}{2}}\frac{\Gamma(\frac{d}{2}-1)}{\Gamma (d-\frac{s}{2})} \left( \Gamma \left(\frac{d-s-1}{2}\right)\Gamma \left(\frac{3}{2}\right) - 2^{d-3}\frac{\Gamma (\frac{d-s+1}{2})\Gamma (\frac{d-1}{2})\Gamma (\frac{d}{2}-1)}{\Gamma (d-2)}\right).
            \end{aligned}
        \end{equation}
            By properties (i) and (iii) of the Gamma function we have that
        \begin{align*}
            \Gamma \left(\frac{d-s+1}{2}\right) & = \left(\frac{d-s-1}{2}\right)\, \Gamma \left(\frac{d-s-1}{2}\right) 
        \end{align*}
        and $\Gamma \left(\frac{3}{2}\right) = \frac{1}{2}\Gamma \left(\frac{1}{2}\right) = \frac{1}{2}\sqrt{\pi }$.
        Therefore, $I_1-I_2=0$ if and only if
        \begin{equation}\label{eq:diff4}
            \frac{\sqrt{\pi}}2\Gamma (d-2) - 2^{d-4}(d-s-1)\,\Gamma \left(\frac{d-1}{2}\right)\Gamma \left(\frac{d}{2}-1\right) = 0 .
        \end{equation}
        Finally, properties (ii) of the Gamma function gives
        \begin{equation}\nonumber
            \Gamma \left(\frac{d-1}{2}\right)\Gamma \left(\frac{d}{2}-1\right) = 2^{3-d}\sqrt{\pi } \Gamma (d-2) ,
        \end{equation}
        hence \eqref{eq:diff4} reduces to
        \begin{equation}\nonumber
           \frac{\sqrt{\pi}}2\Gamma (d-2)(2-d+s)=0,
               \end{equation}
               which is never satisfied for $s<d-2$. This concludes the proof for $d \geq 4$.

        When $d = 3$, the previous computations can be repeated with the only difference being that the integration interval for $\varphi_{2}$ is $(0, 2\pi)$, instead of $(0, \pi)$.
        This change simply introduces an extra factor of 2 in the expressions for both $I_{4}$ and $I_{6}$, so the same calculations still lead to the desired conclusion.
    \end{proof}

    A simple continuity argument leads to the following.

    \begin{cor} Let $d\geq 3$ and $s\in(0,d-2)$. For any $\varepsilon>0$, we set
    \begin{equation}\nonumber
    \Phi_{\varepsilon}(x) = \frac{d-1-\varepsilon}{d-s-1} x_{1}^{2} + \sum_{i=2}^{d} x_{i}^{2} \quad \text{ for } x\in \sph^{d-1}
        \end{equation}
        and for $x\in \R^d$, $x\neq 0$,
        \begin{equation}\nonumber
        W_{s,\varepsilon}(x) = \frac{1}{|x|^s} \Phi_{\varepsilon}\bigg(\frac{x}{|x|}\bigg)
    \end{equation}
    while $W_{s,\varepsilon}(0)=+\infty$.

    Then there exists $\varepsilon_0>0$, depending on $s$, such that for any $0<\varepsilon\leq \varepsilon_0$
    the profile $\Phi_{\varepsilon}$ is continuous, even, and strictly positive, 
    $\widehat{W_{s,\varepsilon}}$
    is continuous and strictly positive on $\sph^{d-1}$, but the measure $\mu_{\iso, d-2}$  is not the minimizer of the energy $I_{s}^{B_{1}}$ with kernel $\Ws$ replaced by~$W_{s,\varepsilon}$. 
    \end{cor}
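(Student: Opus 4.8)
The plan is to deduce the statement from the previous theorem by a perturbation argument, with $\varepsilon$ playing the role of a small parameter; note that for $\varepsilon=0$ one recovers exactly the kernel of the previous theorem. The elementary properties of $\Phi_\varepsilon$ are immediate: continuity and evenness are obvious, and strict positivity holds as soon as $d-1-\varepsilon>0$, that is, whenever $0<\varepsilon<d-1$. Consequently, once the Fourier condition below is checked, $I_s^{B_1}$ with kernel $W_{s,\varepsilon}$ admits a unique minimizer characterized by \eqref{eq:third_el_condition}--\eqref{eq:second_el_condition} through Theorem~\ref{thm:exun} (the set $B_1$ trivially has positive $(d-s)$-capacity).

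First I would apply Lemma~\ref{lemma:admissibility_anisotropy} with $\alpha_1=\frac{d-1-\varepsilon}{d-s-1}$ and $\alpha_i=1$ for $i=2,\dots,d$. The identity obtained in the proof of that lemma gives, up to a positive constant depending on $d$ and $s$,
$$\widehat{W_{s,\varepsilon}}(\xi)=\frac{1}{|\xi|^{d-s}}\sum_{i=1}^{d}\gamma_i\,\frac{\xi_i^2}{|\xi|^2},\qquad \gamma_i:=(1-d+s)\alpha_i+\sum_{j\neq i}\alpha_j,$$
which is continuous on $\sph^{d-1}$ and strictly positive there if and only if $\gamma_i>0$ for every $i$ (test $\xi=e_i$; conversely, a positive convex combination is positive). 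A short computation gives $\gamma_1=\varepsilon$, so the perturbation is designed exactly so that $\gamma_1>0$ for $\varepsilon>0$, whereas for $\varepsilon=0$ one has $\gamma_1=0$ and only non-negativity; for $i\geq2$ the inequality $\gamma_i>0$ at $\varepsilon=0$ is precisely the strict version of \eqref{eq:admissibility}, which reduces to $s^2-sd<0$ and therefore holds since $0<s<d-2<d$. As each $\gamma_i$ depends affinely, hence continuously, on $\varepsilon$, there is $\varepsilon_1>0$ such that $\gamma_i>0$ for all $i$ and all $0<\varepsilon\leq\varepsilon_1$; for such $\varepsilon$ the function $\widehat{W_{s,\varepsilon}}$ is continuous and strictly positive on $\sph^{d-1}$.

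Next I would show that $\mu_{\iso,d-2}$ violates the Euler--Lagrange condition \eqref{eq:first_el_condition} for $I_s^{B_1}$ with kernel $W_{s,\varepsilon}$, and then invoke Theorem~\ref{thm:exun}. Writing $W_{s,\varepsilon}=W_{s,0}-\frac{\varepsilon}{d-s-1}\,\frac{x_1^2}{|x|^{s+2}}$ and using linearity of the convolution with the fixed measure $\mu_{\iso,d-2}$, the quantity
$$A_\varepsilon:=\big(W_{s,\varepsilon}\ast\mu_{\iso,d-2}\big)(e_2)-\big(W_{s,\varepsilon}\ast\mu_{\iso,d-2}\big)(e_1)$$
is an affine function of $\varepsilon$, say $A_\varepsilon=A_0-c\,\varepsilon$ with $c\in\R$, all terms being finite and $W_{s,\varepsilon}\ast\mu_{\iso,d-2}$ being continuous on $\R^d$ by Lemma~\ref{regular_lemm} (applied with $q=d-2$, using $s<d-2<d-1$). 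By the previous theorem $A_0\neq0$, hence $A_\varepsilon\neq0$ for every $\varepsilon\neq A_0/c$ (and for every $\varepsilon$ if $c=0$), so there is $\varepsilon_2>0$ with $A_\varepsilon\neq0$ for $0<\varepsilon\leq\varepsilon_2$. For such $\varepsilon$, the continuity of $W_{s,\varepsilon}\ast\mu_{\iso,d-2}$ together with $A_\varepsilon\neq0$ shows that this potential is not $\mathcal{H}^{d-1}$-a.e.\ constant on $\partial B_1=\supp\mu_{\iso,d-2}$, so \eqref{eq:first_el_condition} fails and $\mu_{\iso,d-2}$ is not the minimizer. Choosing $\varepsilon_0:=\min\{\varepsilon_1,\varepsilon_2,d-1\}$ (or any smaller positive number) completes the proof.

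There is no genuine obstacle here — the argument is, as announced, a continuity perturbation of the previous theorem; the only points deserving care are the upgrade from non-negativity to strict positivity of $\widehat{W_{s,\varepsilon}}$ via the explicit diagonal expression for the $\gamma_i$, and ensuring that a single $\varepsilon_0$ meets the few independent smallness requirements.
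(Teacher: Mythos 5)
Your proof is correct and follows exactly the route the paper intends: the paper gives no details, stating only that the corollary follows by ``a simple continuity argument,'' and your argument is precisely that, carried out explicitly (strict positivity of $\widehat{W_{s,\varepsilon}}$ via the diagonal coefficients $\gamma_1=\varepsilon$ and $\gamma_i>0$ for $i\geq 2$ by continuity, and non-minimality via the affine dependence of $A_\varepsilon$ on $\varepsilon$ together with $A_0\neq 0$ from the preceding theorem and the continuity of the potential from Lemma~\ref{regular_lemm}). No gaps; the only refinement over the paper's hint is that you make the smallness thresholds and the Euler--Lagrange violation explicit.
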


    \bigskip

    \noindent
    \textbf{Acknowledgements.} 
    MGM and EGT are members of GNAMPA--INdAM. 
    MGM and EGT acknowledge support from PRIN 2022 (Project no. 2022J4FYNJ), funded by MUR, Italy, and the European Union -- Next Generation EU, Mission~4 Component~1 CUP~F53D23002760006.
    EGT acknowledges support by the INdAM-GNAMPA project 2025 \lq\lq DISCOVERIES\rq\rq (CUP E5324001950001) and by the Erasmus+ Traineeship program that allowed him to visit LS in Edinburgh.
    LR is supported by the Italian MUR through the PRIN 2022 project n.2022B32J5C, under the National Recovery and Resilience Plan (PNRR), Italy, funded by the European Union  - Next Generation EU, Mission 4 Component 1 CUP~F53D23002710006, and by GNAMPA-INdAM through 2025 projects. 
    LS acknowledges support by the EPSRC under the grants EP/V00204X/1 and EP/V008897/1.\medskip

\end{document}